\theoremstyle{plain}
\newtheorem{thm}{Theorem}
\newtheorem{prop}{Proposition}[section]
\newtheorem{lem}[prop]{Lemma}
\newtheorem{cor}[prop]{Corollary}
\newtheorem{rmk}[prop]{Remark}
\newcommand {\R} {\mathbb{R}} \newcommand {\Z} {\mathbb{Z}}
 \newcommand {\N} {\mathbb{N}}
\newcommand {\p} {\partial}
\newcommand {\D} {\Delta}
\newcommand {\supp} {\text{supp}}
\DeclareMathOperator {\dist} {dist}
\title{Discrete Carleman estimates and three balls inequalities}
\author[A. Fern\'andez-Bertolin]{Aingeru Fern\'andez-Bertolin}
\address{Facultad de Ciencia y Tecnología, Universidad del País Vasco /Euskal Herriko Unibertsitatea (UPV/EHU), Departamento de Matemáticas, UPV/EHU, Apartado 644, 48080 Bilbao, Spain}
\email{aingeru.fernandez@ehu.eus}
\author[L. Roncal]{Luz Roncal}
\address{BCAM - Basque Center for Applied Mathematics \\
48009 Bilbao, Spain and Ikerbasque, Basque Foundation for Science, 48011 Bilbao, Spain}
\email{lroncal@bcamath.org }
\author[A. R\"uland]{Angkana R\"uland}
\address{Max-Planck Institute for Mathematics in the Sciences, Inselstraße 22, 04103 Leipzig, Germany}
\email{rueland@mis.mpg.de}
\curraddr{Ruprecht-Karls-Universität Heidelberg, Institut für Angewandte Mathematik, Im Neuenheimer Feld 205, 69120 Heidelberg, Germany}
\email{Angkana.Rueland@uni-heidelberg.de}
\author[D. Stan]{Diana Stan}
\address{University of Cantabria, Department of Mathematics, Statistics and Computation,
Avd. Los Castros 44, 39005 Santander, Spain}
\email{diana.stan@unican.es}
\date{\today}
\keywords{Discrete magnetic Schr\"odinger operators, Carleman estimates, three balls inequalities}
\subjclass[2010]{Primary 39A12}
\begin{document}

\maketitle

\begin{abstract}
We prove logarithmic convexity estimates and three balls inequalities for discrete magnetic Schrödinger operators. These quantitatively connect the discrete setting in which the unique continuation property fails and the continuum setting in which the unique continuation property is known to hold under suitable regularity assumptions. As a key auxiliary result which might be of independent interest we present a Carleman estimate for these discrete operators.
\end{abstract}

\section{Introduction}
\label{sec:intro}
In this article, we provide robust quantitative unique continuation results for discrete magnetic Schrödinger operators $P_h$ of the form
\begin{equation}
\label{eq:Schroedinger}
P_h f(n):=  h^{-2}\D_d f(n) + h^{-1}\sum\limits_{j=1}^{d} B_j(n) D_{+,j}^h f(n) + V(n) f(n),
\end{equation}
where $f: (h\Z)^d \rightarrow \R$, $D_{\pm,j}^h f(n):= \pm (f(n \pm h e_j)- f(n))$ denotes the (unscaled) left/right difference operator on scale $h$, $B: (h\Z)^d \rightarrow \R^d$ is a (uniformly in $h$) bounded tensor field, modelling, for instance, magnetic interactions and where the potential $V: (h \Z)^d \rightarrow \R$ is assumed to be uniformly bounded (independently of $h$). The operator $\D_{d}:=  \sum\limits_{j=1}^d \D_{d,j,h} = \sum\limits_{j=1}^d D_{-,j}^h  D_{+,j}^h$ is the (not normalized) discrete  Laplacian on the lattice $(h \Z)^d$.

The operators considered in \eqref{eq:Schroedinger} correspond to discrete versions of the continuous magnetic Schrödinger operator. While many features of the continuous and the discrete operators are shared, if correspondingly adapted (e.g. regularity estimates), there are striking differences in the validity of the unique continuation property in these settings. In fact, even for the case of the model operator, the discrete Laplacian, it is well-known that while in the \emph{continuum} the (weak) unique continuation property holds as a direct consequence of the analyticity of the solutions, this fails in general in the \emph{discrete} setting \cite{GM14}. Indeed, in \cite{GM14} the authors show that it is possible to construct non-trivial harmonic polynomials vanishing on a large, prescribed square. In spite of these differences, it is expected that as the lattice spacing decreases, $h \rightarrow 0$, the properties of continuous harmonic functions are recovered. That this is in fact the case for the setting of the discrete Laplacian was proved in \cite{GM14, GM13, LM15}, where propagation of smallness estimates with correction terms were proved for the discrete Laplacian. For similar phenomena for related operators we refer to \cite{BFV17, JLMP18} and the references therein.

Most of the cited propagation of smallness results from the literature however strongly relied on the specific properties of the constant coefficient  Laplacian, e.g. by using methods from complex analysis. It is the purpose of this article to provide quantitative unique continuation estimates and three spheres inequalities for a large class of Schrödinger operators by means of robust Carleman estimates. We emphasize that in addition to the intrinsic interest in the quantitative unique continuation properties of discrete elliptic equations, important applications of these quantitative unique continuation estimates involve inverse and control theoretic problems (see for instance \cite{BHJ10, E11}).

\subsection{Main results}
Let us describe our main results. As a first main result, we seek to prove a discrete analogue (with correction term) of a logarithmic convexity inequality. More precisely, for $u$ with $P_h u = 0$ and $P_h$ being the Schrödinger operator from \eqref{eq:Schroedinger} we provide the following bounds:

\begin{thm}
\label{thm:log_conv}
There exist constants $h_0,\delta_0\in(0,1)$, $c_1, c_2 >0$ and $ \tau_0>1$ such that for all $\tau \in (\tau_0, \delta_0 h^{-1})$, $h \in (0,h_0)$ and $u: (h\Z)^d \rightarrow \R$ with $P_h u = 0$ in $B_4$ it holds
\begin{equation}
\label{eq:log_conv}
\|u\|_{L^2(B_1)} \leq C(e^{c_1 \tau} \|u\|_{L^2(B_{1/2})} + e^{- c_2 \tau}\|u\|_{L^2(B_2)}).
\end{equation}
Here for $r > 0$ we define $B_{r}=B_r(0) \cap (h \Z)^d$, with $h\in (0,h_0)$ denoting the lattice spacing, and all $L^2$ norms are $L^2$ norms on the lattice $(h \Z)^d$.
\end{thm}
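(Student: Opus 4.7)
The plan is to derive Theorem~\ref{thm:log_conv} in the classical way: apply a discrete Carleman estimate for $P_h$ with a suitable radially decreasing weight $e^{\tau\phi}$ to a cutoff of $u$, and exploit the geometric separation of $B_{1/2}$, $B_1$, $B_2$ to convert the resulting weighted inequality into the three-balls bound. The upper restriction $\tau < \delta_0 h^{-1}$ reflects a discrete pseudoconvexity condition inherent to the Carleman estimate: at scales $\tau h \gtrsim 1$ the weight varies too quickly for the lattice to resolve it.

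Concretely, I would fix radii $0 < r_1 < r_2 < \tfrac{1}{2}$ and $1 < r_3 < r_4 < 2$ and choose a radial cutoff $\eta$ on $(h\Z)^d$ with $\eta \equiv 0$ outside $B_{r_4} \setminus B_{r_1}$, $\eta \equiv 1$ on $B_{r_3} \setminus B_{r_2}$, and with $h^{-1}|D_{\pm,j}^h \eta|$ and $h^{-2}|\D_d \eta|$ bounded uniformly in $h$. Applying the Carleman estimate to $v = \eta u$ and using $P_h u = 0$ to replace $P_h(\eta u)$ by the commutator $[P_h,\eta]u$ gives
\begin{equation*}
\tau \, \| e^{\tau\phi} \eta u \|_{L^2((h\Z)^d)} \le C \, \| e^{\tau\phi} [P_h,\eta] u \|_{L^2((h\Z)^d)}.
\end{equation*}
A direct computation with the discrete Leibniz rule shows that the commutator is supported in the two transition shells $A_{\mathrm{in}} := B_{r_2} \setminus B_{r_1} \subset B_{1/2}$ and $A_{\mathrm{out}} := B_{r_4} \setminus B_{r_3} \subset B_2$, and that there it is bounded pointwise by a constant (depending only on $\eta$, $\|B\|_\infty$, and $\|V\|_\infty$) times $|u| + h^{-1}\sum_{\pm,j}|D_{\pm,j}^h u|$.

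Using that $\phi$ is radial and strictly decreasing, I would then replace $e^{\tau\phi}$ by its lower bound $e^{\tau\phi(1)}$ on $B_1 \cap \{\eta = 1\} \supset B_1 \setminus B_{r_2}$ on the left, and by its upper bounds $e^{\tau\phi(r_1)}$ and $e^{\tau\phi(r_3)}$ on $A_{\mathrm{in}}$ and $A_{\mathrm{out}}$ respectively on the right. With $\alpha := \phi(r_1) - \phi(1) > 0$ and $\beta := \phi(1) - \phi(r_3) > 0$, rearranging yields
\begin{equation*}
\tau \, \|u\|_{L^2(B_1 \setminus B_{r_2})} \le C \bigl( e^{\tau \alpha} \, \mathcal E(A_{\mathrm{in}}) + e^{-\tau \beta} \, \mathcal E(A_{\mathrm{out}}) \bigr),
\end{equation*}
where $\mathcal E(A) := \|u\|_{L^2(A)} + h^{-1}\sum_{\pm,j}\|D_{\pm,j}^h u\|_{L^2(A)}$. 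The trivial bound $\|u\|_{L^2(B_{r_2})} \le \|u\|_{L^2(B_{1/2})}$ supplies the missing inner piece, and a discrete Caccioppoli inequality for $P_h$ on slightly enlarged annuli inside $B_{1/2}$ and $B_2$ replaces the gradient contributions in $\mathcal E$ by pure $L^2$ norms of $u$. Setting $c_1 := \alpha$, $c_2 := \beta$ and absorbing the polynomial factor of $\tau$ into the exponentials gives \eqref{eq:log_conv}.

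The main obstacle I anticipate lies in the interaction of the commutator $[P_h,\eta]$ with the magnetic term $h^{-1}\sum_j B_j D_{+,j}^h$: the discrete product rule produces genuine cross terms such as $(D_{+,j}^h\eta)(D_{+,j}^h u)$ whose $h^{-2}$ prefactor can be absorbed only if the underlying Carleman estimate carries a sufficiently strong $\tau$-weighted gradient term on its left-hand side, or equivalently if one propagates a discrete $H^1$ norm throughout. A secondary, bookkeeping issue is verifying that the chosen profile $\phi$ (typically a modified convex profile such as $\phi(r) = r^{-\delta}$ near the origin, regularised by a log or affine tail) satisfies the discrete pseudoconvexity condition for $P_h$ uniformly in $h$ in the range $\tau h < \delta_0$; this requirement is precisely the source of the upper bound on $\tau$ in the statement.
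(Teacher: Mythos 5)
Your proposal is correct and follows essentially the same route as the paper: apply the Carleman estimate to a cutoff $w=\theta u$, use the equation to replace $P_h w$ by commutator terms supported in two transition annuli, exploit the radial monotonicity of the weight to produce the exponential factors, invoke a discrete Caccioppoli inequality to control the gradient contributions in those annuli, and absorb the $V$- and $B$-terms into the left side by taking $\tau$ large. The only cosmetic differences are that the paper phrases the Carleman estimate for $h^{-2}\D_d$ alone (treating the magnetic and potential terms as part of the source) and uses the specific almost-logarithmic weight $\varphi(t)=-\log t+c_{ps}(\cdots)$ rather than a power profile.
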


Due to the restriction on the upper bound of $\tau \leq \delta_0 h^{-1}$, this logarithmic convexity estimate does not immediately yield a three balls inequality as in the continuum. It however implies a three balls estimate with a corresponding correction term:

\begin{thm}
\label{thm:3spheres}
There exist $\alpha \in (0,1)$, $c_0>0$, $h_0\in(0,1)$ and $C>1$ such that for $h\in (0,h_0)$ and $u: (h\Z)^d \rightarrow \R$ with $P_h u = 0$ in $B_4$ we have
\begin{equation}
\label{eq:3_balls}
\|u\|_{L^2(B_1)} \leq C(\|u\|_{L^2(B_{1/2})}^{\alpha}\|u\|_{L^2(B_2)}^{1-\alpha} + e^{- c_0 h^{-1}}\|u\|_{L^2(B_2)}).
\end{equation}
\end{thm}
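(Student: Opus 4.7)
The plan is to deduce Theorem~\ref{thm:3spheres} from Theorem~\ref{thm:log_conv} by optimizing over the Carleman parameter $\tau$, being careful that the upper constraint $\tau<\delta_0 h^{-1}$ now obstructs a fully free optimization and produces the correction term.

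First I would abbreviate $A=\|u\|_{L^2(B_{1/2})}$, $B=\|u\|_{L^2(B_2)}$, $M=\|u\|_{L^2(B_1)}$, so that Theorem~\ref{thm:log_conv} reads
\begin{equation*}
M \leq C\bigl(e^{c_1\tau}A + e^{-c_2\tau}B\bigr)\qquad \text{for all } \tau\in(\tau_0,\delta_0 h^{-1}).
\end{equation*}
The unconstrained minimizer of the right-hand side is
\begin{equation*}
\tau_* := \tfrac{1}{c_1+c_2}\log(B/A),
\end{equation*}
which balances the two terms and, if admissible, gives the clean interpolation bound $M \leq 2C\, A^{\alpha} B^{1-\alpha}$ with $\alpha = c_2/(c_1+c_2)\in(0,1)$.

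Next I would split into three regimes. If $\tau_*\in(\tau_0,\delta_0 h^{-1})$, insertion of $\tau=\tau_*$ directly yields the first summand of \eqref{eq:3_balls}. If $\tau_*\leq \tau_0$, then $B\leq e^{(c_1+c_2)\tau_0}A$, and combining this with the trivial inequality $M\leq B=A^{\alpha}B^{1-\alpha}(B/A)^{\alpha}$ still produces a bound of the form $M \leq C' A^{\alpha}B^{1-\alpha}$ after absorbing $e^{(c_1+c_2)\tau_0\alpha}$ into the constant. The interesting regime is $\tau_*\geq \delta_0 h^{-1}$: here I would set $\tau = \delta_0 h^{-1}$ (the largest admissible value) in Theorem~\ref{thm:log_conv}, yielding
\begin{equation*}
M \leq C\bigl(e^{c_1\delta_0 h^{-1}}A + e^{-c_2\delta_0 h^{-1}}B\bigr),
\end{equation*}
and then use $\tau_*\geq \delta_0 h^{-1}$, i.e.\ $A\leq B\, e^{-(c_1+c_2)\delta_0 h^{-1}}$, to bound the first term by $e^{-c_2\delta_0 h^{-1}}B$. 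Setting $c_0:=c_2\delta_0$ gives $M \leq 2C\, e^{-c_0 h^{-1}}B$, which is exactly the correction term in \eqref{eq:3_balls}. Taking the maximum of the two contributions across the three regimes yields \eqref{eq:3_balls}.

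The main conceptual obstacle is precisely the third regime: in the continuous case there is no upper cutoff on $\tau$ and the three balls inequality is obtained directly by choosing $\tau=\tau_*$. In the discrete setting the Carleman parameter is tied to the lattice spacing through $\tau<\delta_0 h^{-1}$, reflecting the failure of unique continuation on the lattice, and this cutoff is exactly what converts the clean multiplicative interpolation into an additive bound with an $h$-dependent correction. The rest of the argument is the standard optimization; once the three regimes are identified, the bookkeeping of the exponents $\alpha$ and $c_0$ is routine.
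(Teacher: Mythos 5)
Your proposal is correct and follows essentially the same optimization-over-$\tau$ argument as the paper: define $\tau_*$ so that the two terms balance, use it directly if admissible, and otherwise invoke the constrained boundary value of $\tau$ together with the balancing identity to extract the $e^{-c_0h^{-1}}$ correction. Two small remarks: the value $\tau=\delta_0h^{-1}$ lies on the boundary of the open interval $(\tau_0,\delta_0h^{-1})$, so strictly one should pick an interior value such as $\tau=\delta_0h^{-1}/2$ (the paper does exactly this), which only changes $c_0$ to $c_2\delta_0/2$; and you explicitly dispose of the case $\tau_*\leq\tau_0$ via the trivial bound $\|u\|_{L^2(B_1)}\leq\|u\|_{L^2(B_2)}$, whereas the paper simply asserts one may assume $\tau_0<\tau^*$ --- your treatment of that edge case is the more careful one.
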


This estimate thus quantitatively connects the discrete situation in which the unique continuation property fails to its continuous counterpart. It provides quantitative evidence of the fact that as $h \rightarrow 0$, the propagation of smallness property of the associated elliptic operator is recovered. We remark that the scaling behaviour of the form $e^{-c_0 h^{-1}}$ in $h \in (0,h_0)$ had earlier been proven for the special case of the Laplacian (see \cite[Theorem 1]{GM14}) and is known to be optimal (see the discussion in \cite[Section 4]{GM14}). A similar, asymptotically optimal three balls estimate with Gaussian weight and with error terms is given in \cite{LM15}, see also \cite[Corollary 1.14]{LM15}.  

We remark that our results (and arguments) remain valid if instead of the differential equation \eqref{eq:Schroedinger} we consider the differential inequality
\begin{equation*}
|h^{-2}\D_{d} f(n)| \leq C\Big( h^{-1}\sum\limits_{j=1}^{d} |D_{+,j}^h f(n)| + |f(n)|  \Big) \mbox{ for some } 0<C<\infty.
\end{equation*}

Further, it is possible to deduce propagation of smallness estimates for some controlled $h$-dependent growth of $V$ and $B_{j}$ (see Remark \ref{rmk:singular_potentials}) which however, of course, do not pass to the limit as $h\rightarrow 0$.

\subsection{Main ideas}

Similarly as in \cite{BHJ10, E11} and contrary to the results in \cite{GM14, LM15}, both of our results rely on a robust $L^2$ Carleman estimate. While \cite{BHJ10} however relies on Carleman estimates with weights which have strong (pseudo)convexity properties, proving a three balls inequality requires working with (close to) limiting Carleman weights. More precisely, as our key auxiliary result we prove the following Carleman estimate with a weight which is a slightly convexified version of the limiting Carleman weight $\psi(x) = -\tau \log(|x|)$ and which we choose as, for example, in  \cite{KRS16}:

\begin{thm}
\label{thm:Carl}
Let $u: (h\Z)^d \rightarrow \R$ be such that $ h^{-2}\D_{d} u = g$ in $B_4$ with $\supp(u) \subset B_2 \setminus B_{1/2}$ and $g\in L^2(B_4)$. Let $\phi(x):=\tau \varphi(|x|)$,  where
$$
\varphi(t)= -\log t + c_{ps} \Big( \log t \arctan(\log t) - \frac{1}{2} \log(1+\log^2t) \Big)
$$
for a certain small constant $c_{ps}>0$.
Then, there exist $h_0,\delta_0\in(0,1)$ with $h_0<\delta_0$, $C >1$ and $\tau_0>1$ (which are independent of $u$) such that for all $h\in (0,h_0)$ and $\tau \in (\tau_0, \delta_0 h^{-1})$ we have
\begin{equation}
\label{eq:Carl_main}
\tau^3 \|e^{\phi} u\|_{L^2}^2 + \tau \|e^{\phi} h^{-1} D_s u \|_{L^2}^2 + \tau^{-1} \|e^{\phi} h^{-2} D_s^2 u \|_{L^2}^2
\leq C \|e^{\phi} g \|_{L^2}^2.
\end{equation}
Here $D_s u(n) := \frac12\sum\limits_{j=1}^{d} (u(n + h e_j) - u(n-h e_j))$, where $e_j$ is the unit vector in the $j$-th direction, denotes the symmetric discrete difference operator.
\end{thm}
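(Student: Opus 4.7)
The plan is to adapt the classical conjugation-plus-positive-commutator scheme to the lattice $(h\Z)^d$. Introducing the change of unknown $v := e^{\phi} u$, one studies the discrete conjugated operator $L_\phi v := e^{\phi} h^{-2}\D_d(e^{-\phi} v)$. Using the discrete Leibniz identity $D_{\pm,j}^h(fg)(n) = g(n) D_{\pm,j}^h f(n) + f(n \pm h e_j) D_{\pm,j}^h g(n)$, $L_\phi$ can be written explicitly in terms of the shift factors $\alpha_j^\pm(n) := e^{\phi(n) - \phi(n \pm h e_j)}$ and split as $L_\phi = S_\phi + A_\phi$, with $S_\phi$ self-adjoint and $A_\phi$ skew-adjoint on $L^2((h\Z)^d)$. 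Polarization then yields $\|L_\phi v\|^2 = \|S_\phi v\|^2 + \|A_\phi v\|^2 + \langle [S_\phi, A_\phi] v, v\rangle$, and the proof reduces to a lower bound for the right-hand side of the form $c\tau^3 \|v\|^2 + c\tau \|h^{-1} D_s v\|^2 + c\tau^{-1}\|h^{-2} D_s^2 v\|^2$.

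The choice of $\varphi$ is dictated by the continuous analogue: since $t \varphi'(t) = -1 + c_{ps}\arctan(\log t)$, one has $(t\varphi'(t))' = c_{ps}/(t(1+\log^2 t)) > 0$, so the radial Hessian of $\phi$ is strictly positive on the relevant annulus. In the continuum this pseudoconvexity, together with the zeroth-order contribution $\tau^3 |\varphi'(|x|)|^2$ that compensates the negative tangential part of the Hessian, gives the positivity of the commutator. In the discrete framework I would Taylor-expand each factor $\alpha_j^\pm$ as $1 \mp h\,\partial_j\phi + \tfrac{h^2}{2}(\partial_j\phi)^2 \mp \tfrac{h^2}{2}\partial_j^2\phi + \ldots$, reproducing the continuous principal commutator modulo explicit error terms. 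Since $|\partial^k\phi| \lesssim \tau$ on the shell where $u$ is supported, each successive term in the expansion carries a relative factor of $\tau h$, which under the running hypothesis $\tau h \leq \delta_0$ can be made arbitrarily small.

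The principal obstacle is precisely the book-keeping of the discrete correction terms: $[S_\phi, A_\phi]$ on the lattice is a fourth-order finite-difference operator whose positive principal symbol only agrees morally with its continuous counterpart, and one must verify that every cross term is dominated by a fraction of the positive principal part rather than accumulating. Choosing $\delta_0$ sufficiently small is exactly what makes these remainders absorbable, and this is the origin of the condition $\tau \leq \delta_0 h^{-1}$ in the statement. Once the positive commutator bound is established, the three contributions on the left-hand side of \eqref{eq:Carl_main} can be separated by applying Cauchy--Schwarz to $\langle S_\phi v, A_\phi v\rangle$ with judicious powers of $\tau$ distributed between the factors. Finally, transferring from $v$ back to $u$ via $v = e^\phi u$ produces only further $O(\tau h)$ errors that the same absorption argument handles, and the assumption $\supp(u) \subset B_2 \setminus B_{1/2}$ eliminates boundary contributions in every discrete summation by parts, concluding the proof.
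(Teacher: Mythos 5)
Your outline correctly identifies the general scheme — conjugation, the decomposition $L_\phi = S_\phi + A_\phi$, the exact identity $\|L_\phi v\|^2 = \|S_\phi v\|^2 + \|A_\phi v\|^2 + \langle [S_\phi,A_\phi]v,v\rangle$, the Taylor expansion of the shift factors, and the role of $\tau h \le \delta_0$ in making discrete correction terms absorbable. But there is a genuine gap at the heart of the argument: you treat the positivity of the quadratic form as if it came from a \emph{pointwise} positive commutator, plus book-keeping. For the nearly limiting weight $\varphi$ chosen here, that is not the case. Because $c_{ps}$ is small, $\langle[S_\phi,A_\phi]v,v\rangle$ is \emph{not} sign-definite; the pseudoconvexity of Lemma \ref{lem:weight} only yields positivity of the relevant symbol on the joint characteristic set $\{|\xi|^2=|\nabla\phi|^2,\ \nabla\phi\cdot\xi=0\}$, and away from that set one must instead use that $\|S_\phi v\|^2+\|A_\phi v\|^2$ is elliptic. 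Your proposal never explains how these two regimes are glued together, and Taylor-expanding $\alpha_j^\pm$ term by term cannot produce this structure because it is not a pointwise (physical-space) phenomenon but a phase-space one.

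The mechanism the paper uses, and which is absent from your write-up, consists of three steps: (i) localize $v$ to scales of order $\epsilon_0^{-1}\tau^{-1/2}$ with a partition of unity (Lemma \ref{lem:localize}); (ii) on each patch, freeze the coefficients $\partial_j\phi(\bar n)$, $\partial_{jk}\phi(\bar n)$ at a point $\bar n$ in the patch (Lemma \ref{lem:freeze}), so that the resulting operators $\bar S_\phi$, $\bar A_\phi$, $\bar{\mathcal C}_{jk}$ have constant coefficients; and (iii) pass to Fourier variables $\xi\in (h^{-1}\mathbb{T})^d$ and split into a high-frequency elliptic region $|\xi|\gtrsim\tau$ (where $p_r^2$ alone dominates), a low-frequency region away from the characteristic set (where $p_r^2+p_i^2\gtrsim \tau^4+|\xi|^4$), and a $\gamma_0\tau$-neighbourhood of the characteristic set, where the commutator symbol is positive precisely by Lemma \ref{lem:weight} and a continuity/homogeneity argument (Proposition \ref{lem:lower_frozen}). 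The constant $c_0$ multiplying $\tau\langle[S_\phi,A_\phi]v,v\rangle$ must be taken small so the unsigned commutator symbol is absorbed in the first two regimes; this interplay is what you would call ``the positive principal part,'' but it only emerges after the localization--freezing--Fourier reduction. Without it, the proposal as written has no way to close the estimate, because the commutator you are trying to bound from below simply is not bounded from below by $c\tau^3\|v\|^2$ in the first place.

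A smaller point: the separation of the three left-hand terms in \eqref{eq:Carl_main} does not come from ``applying Cauchy--Schwarz to $\langle S_\phi v, A_\phi v\rangle$''; it falls out directly of the Fourier lower bound $p_r^2+p_i^2+q \gtrsim \tau^4 + \tau^2 h^{-2}\sum\sin^2(h\xi_j) + h^{-4}\sum\sin^4(h\xi_j)$ via Parseval, after which one multiplies by $\tau$ and divides the final inequality $\tau\|L_\phi v\|^2\gtrsim\cdots$ by $\tau$.
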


\begin{rmk}
\label{rmk:deriv}
We remark that the choice of the symmetric discrete derivative $D_s$ in \eqref{eq:Carl_main} does not play a substantial role. With only minor changes it is also possible to replace it by $D_+^h$ or $D_-^h$. We refer to the beginning of Section \ref{sec:symmetric} for the precise definitions.

The constraints on the size of the constant $c_{ps}>0$ are specified in the proof of Lemma \ref{lem:weight}.
\end{rmk}

Comparing our estimate with the previous Carleman estimates for discrete operators from \cite{BHJ10, E11}, we emphasize that in proving three spheres inequalities and doubling properties, it is no longer possible to use strongly convex Carleman weights as in \cite{BHJ10}. As a consequence, the derivation of positivity for the commutator becomes more intricate. In this sense, our estimate is closer in spirit to the estimates from \cite{E11}, in which the authors construct discrete complex geometric optics solutions and which thus requires working with \emph{limiting Carleman weights}. However, contrary to \cite{E11} working in a ``unique continuation setting'', we can not rely on ``plane wave'' Carleman weights but have to use the more singular \emph{(almost) logarithmic weights} which we only convexify very slightly. On a technical level this also leads to more complex commutator contributions. Hence, we are confronted with a situation in which only very little pseudoconvexity persists and in which the algebraic, discrete computations become rather involved. In order to overcome this, as one of the main ingredients of our proof, we relate the discrete quantities to their continuous counterparts for which the underlying pseudoconvexity structures become more transparent. 

While building on similar ideas as in its continuous counterpart (see for instance \cite{KT01, AKS62}), our Carleman estimate is restricted to a certain range of values of $\tau$ which is a purely discrete phenomenon. Similar restrictions had earlier been observed in \cite{BHJ10, E11} in the context of Carleman estimates for control theoretic and inverse problems. In deriving this estimate, we localize to suitable scales on which we freeze coefficients and compare our discrete problem to the continuum setting.

\subsection{Outline of the article}
The remainder of the article is organized as follows: In Section \ref{sec:symmetric} we compute the conjugated discrete operator and its expansion into its symmetric, antisymmetric parts and their commutator. In the main part, in Section \ref{sec:Carl}, we derive the main Carleman estimate of Theorem \ref{thm:Carl}. Building on this, in Section \ref{sec:proofs_main} we deduce the results of Theorems \ref{thm:log_conv} and \ref{thm:3spheres}. Last but not least, in Section \ref{sec:scaling}, we comment on rescaled versions of the main estimates.

\subsection{Remarks on the notational conventions}
Concerning notation, with the letters $c, C,\ldots$ we denote structural constants that depend only on the dimension and on parameters that are not relevant. Their values might vary from one occurrence to another, and in most of the cases we will not track the explicit dependence. For the Fourier transform of a function $f$ we will use the notation $\widehat{f}$.

%%%%%%%%%%%%%%%%%%%%%%%%%%%%%
\section{The Conjugated Laplacian and the Commutator}
\label{sec:symmetric}
%%%%%%%%%%%%%%%%%%%%%%%%%%%%%

From now on, $D_{\pm}^j$  will stand for the forward/backward operators $D_{\pm,j}^h $ from Section \ref{sec:intro} and $ D_{s}^j$ will denote the symmetric discrete derivatives in the $j$-th direction, i.e. $D_s^j u(n):= \frac{1}{2}(u(n+h e_j)- u(x- h e_j))$. All operators are understood to be taken with step size $h$. Moreover, $D_{\pm}^h:=  \sum\limits_{j=1}^d D_{\pm}^j $ and $D_{s}:=  \sum\limits_{j=1}^d D_{s}^j$. We remark that the symmetric difference operator is associated with the Fourier multiplier $i\sum\limits_{j=1}^{d} \sin(h \xi_j)$, where $i$ denotes the complex unit.

Heading towards the proof of the Carleman inequality of Theorem \ref{thm:Carl}, we introduce the conjugated Laplacian
\begin{equation}
\label{SAphi}
L_{\phi,h} f(n):= h^{-2}e^{\phi} \D_d e^{-\phi} f(n) = \sum\limits_{j=1}^{d} \big[ S_j f(n) + A_j f(n) \big]=:(S_{\phi}f(n)+A_{\phi}f(n)),
\end{equation}
where the symmetric and anti-symmetric operators are
\begin{align*}
S_jf(n) &=h^{-2}((\cosh(D_+^j\phi(n)))f(n+ h e_j)+ (\cosh(D_{-}^j\phi(n)))f(n- h e_j)-2f(n)),\\
A_jf(n) &=h^{-2}((\sinh(D_{-}^j\phi(n)))f(n- h e_j)- (\sinh(D_+^j\phi(n)))f(n+he_j)).
\end{align*}
We compute the commutator of this to be
\begin{align*}
[S,A]f(n) &=  \sum_{j,k}[S_j,A_k]f(n)=\sum_{j,k}\{S_jA_k-A_k S_j\}f(n)
\\
&=\frac12\sum_{j,k}\{S_jA_k-A_k S_j\}f(n) +\frac12\sum_{j,k}\{S_kA_j-A_j S_k\}f(n)\\
&=:\sum_{j,k}h^{-4}T_{j,k}f(n),
\end{align*}
where
\begin{align*}
T_{j,k} f(n)&:= h^4[S_j, A_k]f(n) =A_{j,k} f(n+ h e_j+ h e_k)\\
&\quad+B_{j,k} f(n- h e_j- h e_k)+ C_{j,k}f(n+ h e_j- h e_k) + E_{j,k}f(n- h e_j+ h e_k),
\end{align*}
with
\begin{align*}
A_{j,k} &= -\cosh(D^j_+ \phi(n)) \sinh(D_+^k \phi(n+ h e_j)) + \sinh(D_+^k \phi(n))\cosh(D^j_+ \phi(n+ h e_k)),\\
B_{j,k} & = \cosh(D^j_-\phi(n))\sinh(D^k_- \phi(n-h e_j)) - \sinh(D^k_- \phi(n))\cosh(D^j_-\phi(n- h e_k)),\\
C_{j,k} & = \cosh(D^j_+\phi(n))\sinh(D^k_- \phi(n+ h e_j))-\sinh(D^k_- \phi(n))\cosh(D^j_+ \phi(n- h e_k)),\\
E_{j,k} & = -\cosh(D^j_- \phi(n))\sinh(D^k_+\phi(n- h e_j)) + \sinh(D^k_+ \phi(n)) \cosh(D^j_-\phi(n+ h e_k)).
\end{align*}
Now, using trigonometric identities, these can be simplified to read
\begin{align*}
A_{j,k}&=-\sinh(D_+^jD_+^k\phi(n))\cosh(D_+^j\phi(n)-D_+^k\phi(n)),\\
B_{j,k}&=-\sinh(D_-^jD_-^k\phi(n))\cosh(D_-^j\phi(n)-D_-^k\phi(n)),\\
C_{j,k}&=\sinh(D_+^jD_-^k\phi(n))\cosh(D_+^j\phi(n)+D_-^k\phi(n)),\\
E_{j,k}&=\sinh(D_-^jD_+^k\phi(n))\cosh(D_-^j\phi(n)+D_+^k\phi(n)).
\end{align*}
Indeed, for instance, for $A_{j,k}$ we obtain
\begin{align*}
A_{j,k}&=-\cosh(D^j_+ \phi(n)) \sinh(D_+^k \phi(n+ h e_j)) + \sinh(D_+^k \phi(n))\cosh(D^j_+ \phi(n+ h e_k))\\
&= - \cosh(D^j_+ \phi(n)) \sinh(D^k_+ D^j_+ \phi(n) + D^k_+ \phi(n))\\
& \quad + \cosh(D^j_+ D^k_+ \phi(n) + D^j_+ \phi(n)) \sinh(D^k_+ \phi(n))\\
& = -\cosh(D^j_+ \phi(n))\big[ \sinh(D^k_+ D^j_+ \phi(n)) \cosh(D^k_+ \phi(n)) + \sinh(D^k_+ \phi(n))\cosh(D^k_+ D^j_+ \phi(n)) \big]\\
&\quad  +  \sinh(D^k_+\phi(n))\big[ \cosh(D^j_+ D^k_+ \phi(n))\cosh(D^j_+ \phi(n)) + \sinh(D^j_+ D^k_+ \phi(n)) \sinh(D^j_+ \phi(n)) \big]\\
& = \sinh(D^j_+ D^k_+ \phi(n))\big( \sinh(D^j_+ \phi(n))\sinh(D^k_+\phi(n)) - \cosh(D^k_+\phi(n))\cosh(D^j_+\phi(n)) \big)\\
& = - \sinh(D^j_+ D^k_+ \phi(n)) \cosh(D^j_+\phi(n) - D^k_+ \phi(n)).
\end{align*}
The arguments for the other contributions are similar.

We next seek to investigate the commutator in more detail.

\begin{rmk}
\label{rmk:oneD}
In the one-dimensional situation the commutator can be simplified significantly: Indeed, if we study the commutator term $\langle [S,A]f,f\rangle$, the case $j=k=1$ is quite simple and leads to
\[
\sum_{ n\in  \mathbb{Z} }\Big\{4\sinh(\Delta_d \phi(n))|D_s f(n)|^2-\Delta_d\sinh(\Delta_d \phi(n))|f(n)|^2+2\sinh(\Delta_d \phi(n))(\cosh(2D_s\phi(n))-1)|f(n)|^2\Big\}.
\]
The main term is a discrete version of $4\phi_{jj}|f_j|^2+4\phi_{jj} \phi_j\phi_j|f|^2- \phi_{jjjj}|f|^2$, where the subindices refer to differentiation in the corresponding direction. Note that the main term of the higher dimensional continuous commutator is more complicated and is of the form
\[
 4\phi_{jk}f_j \overline{f_k}+4\phi_{jk} \phi_j\phi_k|f|^2- \phi_{jjkk}|f|^2.
\]
\end{rmk}

In the general case, we can rewrite the contributions of $h^4 \langle [S,A]f,f\rangle$ in the following way (where with slight abuse of notation, we refrain from spelling out the sums in $(h\mathbb{Z})^d$ and the sum in $j,k$):
\begin{align}
\label{eq:comm_spell_out}
\begin{split}
&\sinh(D_{++}^{j,k}\phi(n))f(n+ h e_j)\overline{f(n+ h e_k)}+\sinh(D_{--}^{j,k}\phi(n))f(n- h e_j)\overline{f(n- h e_k)}\\
&\quad -\sinh(D_{+-}^{j,k}\phi(n))f(n+ h e_j)\overline{f(n- he_k)}-\sinh(D_{-+}^{j,k}\phi(n))f(n- h e_j)\overline{f(n+ h e_k)}\\
&\quad+\sinh(D_{++}^{j,k}\phi(n))\big(\cosh(\phi(n+ h e_j+ h e_k)-\phi(n))-1\big)f(n+ h e_j)\overline{f(n+ h e_k)}\\
&\quad+\sinh(D_{--}^{j,k}\phi(n))\big(\cosh(\phi(n- h e_j- h e_k)-\phi(n))-1\big)f(n- h e_j)\overline{f(n- h e_k)}\\
&\quad-\sinh(D_{+-}^{j,k}\phi(n))\big(\cosh(\phi(n+h e_j- h e_k)-\phi(n))-1\big)f(n+ h e_j)\overline{f(n- h e_k)}\\
&\quad-\sinh(D_{-+}^{j,k}\phi(n))\big(\cosh(\phi(n- h e_j+ h e_k)-\phi(n))-1\big)f(n- h e_j)\overline{f(n+ h e_k)}.
\end{split}
\end{align}

The interest of writing the general term in this form is that we seek to bring the commutator term into a form which is as close as possible to the form of the commutator in the continuous setting which reads
\begin{equation*}
4( \nabla \phi \cdot \nabla^2 \phi \nabla \phi) f^2 + 4 \nabla f \cdot \nabla^2 \phi \nabla f - \D^2 \phi  f^2.
\end{equation*}
To this end, we note that the first four terms in \eqref{eq:comm_spell_out} are closely related to the part  $4\phi_{jk}f_j \overline{f_k}- \phi_{jjkk}|f|^2$ and the last four terms to $4\phi_{jk} \phi_j\phi_k|f|^2$ correspondingly.

We will use the expression \eqref{eq:comm_spell_out} as the starting point of our commutator estimates in the following sections.

\section{Proof of the Carleman Estimate from Theorem \ref{thm:Carl}}

\label{sec:Carl}

Before turning to the proof of Theorem \ref{thm:Carl} let us recall an auxiliary result showing the strong pseudoconvexity (in the continuous sense) of the weight function $\phi(x)$:

\begin{lem}
\label{lem:weight}
Let $\phi(x):=\tau \varphi(|x|)$, where for some small constant $c_{ps}>0$
\begin{equation}
\label{eq:weight_convexified_1}
\varphi(t)= -\log t + c_{ps} \Big( (\log t) \arctan(\log t) - \frac{1}{2} \log(1+(\log t)^2) \Big).
\end{equation}
Then $\phi(x)$ is strongly pseudoconvex with respect to the Laplacian and with respect to the domain $B_4 \setminus B_1$ in the sense that there exists a constant $C>0$ (which is independent of $\tau$) such that in $B_4 \setminus B_1$ we have
\begin{equation*}
\nabla \phi \cdot \nabla^2 \phi \nabla \phi + \xi \cdot \nabla^2 \phi \, \xi \geq C\tau^3 >0 \mbox{ on } \{|\xi|^2 = |\nabla \phi|^2, \ \nabla \phi \cdot \xi = 0\}.
\end{equation*}
\end{lem}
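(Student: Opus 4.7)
The plan is to exploit the radial symmetry of $\phi(x) = \tau\varphi(|x|)$ to reduce the pseudoconvexity condition to a one-dimensional inequality on $\varphi$. Writing $\rho = |x|$ and $\hat{x} = x/\rho$, I would first compute
\[
\nabla\phi = \tau\varphi'(\rho)\,\hat{x}, \qquad \nabla^2\phi = \tau\varphi''(\rho)\,\hat{x}\otimes\hat{x} + \tau\frac{\varphi'(\rho)}{\rho}(I - \hat{x}\otimes\hat{x}).
\]
For $\xi$ with $\xi \cdot \nabla\phi = 0$, the vector $\xi$ is tangential, so $\xi \cdot \nabla^2\phi\,\xi = \tau\frac{\varphi'(\rho)}{\rho}|\xi|^2$. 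Using the normalization $|\xi|^2 = |\nabla\phi|^2 = \tau^2\varphi'(\rho)^2$, the quantity to lower-bound collapses to the scalar expression
\[
\nabla\phi \cdot \nabla^2\phi\,\nabla\phi + \xi \cdot \nabla^2\phi\,\xi = \tau^3\Big(\varphi'(\rho)^2\varphi''(\rho) + \frac{\varphi'(\rho)^3}{\rho}\Big),
\]
so the problem reduces to showing that this bracket is bounded below by a positive constant on $\rho \in [1,4]$.

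Next, I would compute $\varphi'$ and $\varphi''$ via the chain rule in $s = \log t$. The algebraic point is that $\frac{d}{ds}\bigl(s\arctan s - \tfrac{1}{2}\log(1+s^2)\bigr) = \arctan s$, which produces the compact formulas
\[
t\varphi'(t) = -\bigl(1 - c_{ps}\arctan(\log t)\bigr) =: -A(t), \qquad t^2\varphi''(t) = A(t) + \frac{c_{ps}}{1+(\log t)^2} =: A(t) + B(t).
\]
Substituting yields the clean identity
\[
\varphi'(\rho)^2\varphi''(\rho) + \frac{\varphi'(\rho)^3}{\rho} = \frac{A(\rho)^2(A(\rho)+B(\rho)) - A(\rho)^3}{\rho^4} = \frac{A(\rho)^2 B(\rho)}{\rho^4}.
\]
The significance of this cancellation is that the unmodified logarithmic weight $-\log t$ exactly saturates the pseudoconvexity inequality for the Laplacian (this is precisely why $-\log|x|$ is only a \emph{limiting} Carleman weight), and the strict positivity is produced entirely by the convexification term encoded in $B(t)$.

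Finally, on the annulus $\rho \in [1,4]$ the denominator $\rho^4(1 + (\log\rho)^2)$ is bounded above, and $A(\rho) = 1 - c_{ps}\arctan(\log\rho)$ is bounded below by a positive constant provided one chooses $c_{ps}$ small enough that $c_{ps}\arctan(\log 4) < 1$ (any $c_{ps} < 2/\pi$ certainly suffices). This gives a uniform lower bound $C(c_{ps})\tau^3$ with $C(c_{ps})>0$, completing the proof. There is no real obstacle: the computation is essentially an ODE calculation, and the only hypothesis used on $c_{ps}$ is the smallness condition that guarantees $A > 0$ on the annulus, which is the implicit constraint referred to in Remark~\ref{rmk:deriv}.
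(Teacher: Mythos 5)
Your proof is correct and follows essentially the same route as the paper: reduce by radial symmetry to the scalar expression $\varphi'(\rho)^2\varphi''(\rho) + \varphi'(\rho)^3/\rho$, compute via the substitution $s = \log t$, and observe that the $-\log t$ part cancels exactly (reflecting that it is a limiting Carleman weight), leaving the strictly positive convexification term. Your explicit $A$, $B$ bookkeeping and the sufficient bound $c_{ps} < 2/\pi$ are a minor streamlining; the paper's expression $c_{ps}(-1+c_{ps}\arctan\log|x|)^2/(|x|^4(1+\log^2|x|))$ is identical to your $A^2B/\rho^4$.
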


\begin{proof}
In order to prove pseudoconvexity, we seek to prove that
\begin{equation*}
\nabla \phi \cdot \nabla^2 \phi \nabla \phi + \xi \cdot \nabla^2 \phi \,\xi\ge C \tau^3 >0 \mbox{ on } \{|\xi|^2 = |\nabla \phi|^2 ,\ \nabla \phi \cdot \xi = 0\}.
\end{equation*}
Without loss of generality, we consider the case $\tau=1$ only; the general case follows then by rescaling. Now, if $\phi(x) = \varphi(|x|)$ we have that
\begin{equation*}
\nabla^2 \phi(x) = \frac{\varphi'(|x|)}{|x|} \operatorname{Id} + \Big( \varphi''(|x|)-\frac{\varphi'(|x|)}{|x|}\Big)\frac{x}{|x|}\otimes \frac{x}{|x|}.
\end{equation*}
Moreover, $\nabla \phi(x)$ is an eigenvector of $\nabla^2 \phi(x)$ with eigenvalue $\lambda(x)=  \varphi''(|x|)$ and
\begin{equation*}
\nabla \phi(x) \cdot \nabla^2 \phi(x) \nabla \phi(x) = \varphi''(|x|)(\varphi'(|x|))^2.
\end{equation*}
Furthermore, the only other eigenvalue (of multiplicity $d-1$) of $\nabla^2 \varphi(|x|)$ is given by $\mu(|x|) = \frac{\varphi'(|x|)}{|x|}$. Due to the constraint that $ \{|\xi|^2 = |\nabla \phi|^2, \ \nabla \phi \cdot \xi = 0\}$, we therefore infer that
\begin{equation*}
\nabla \phi(x) \cdot \nabla^2 \phi(x) \nabla \phi(x) + \xi \cdot \nabla^2 \phi(x) \xi
= (\varphi'(|x|))^2 \Big(\varphi''(|x|)+\frac{\varphi'(|x|)}{|x|} \Big).
\end{equation*}
For $\phi(x)= -\tau \log(|x|)$ this vanishes (as it is a limiting Carleman weight) but for \eqref{eq:weight_convexified_1} one obtains that on the critical set
\begin{equation*}
\nabla \phi \cdot \nabla^2 \phi \nabla \phi + \xi \cdot \nabla^2 \phi \,\xi
= (\varphi'(|x|))^2 \Big(\varphi''(|x|)+\frac{\varphi'(|x|)}{|x|} \Big)
= c_{ps}\frac{(-1 + c_{ps} \arctan(\log(|x|)))^2}{|x|^4(1+\log^2(|x|))},
\end{equation*}
which is positive for some sufficiently small $c_{ps}>0$. 
\end{proof}

In the sequel, we present several auxiliary results which allow us to steadily transform the discrete conjugated operator into an operator that closely resembles the continuum version of the conjugated Laplacian. Recall that we define the discrete Laplacian in direction $j\in\{1,\dots, d\}$ as
\begin{equation*}
\D_{d,h,j}f(n) = f(n + h e_j) + f(n - h e_j) - 2 f(n).
\end{equation*}
As a first step towards the desired Carleman estimate, we localize the problem to scales of order $\epsilon_0^{-1} \tau^{-\frac{1}{2}}$, where $\epsilon_0>0$ is a small constant which will be chosen below (see the proof of Theorem \ref{thm:Carl}):

 \begin{lem}
\label{lem:localize}
Let $\phi$ be as in Lemma \ref{lem:weight}. Let $S_{\phi}$, $A_{\phi}$ be as in \eqref{SAphi}. Let $\{\psi_k(x)\}_{k\in \Z}$ be a partition of unity subordinate to an open cover of $B_2 \setminus B_{\frac{1}{2}}$ which is localized to scales of order $\epsilon_0^{-1} \tau^{-\frac{1}{2}}$ for some $\epsilon_0 \in (0,1)$ small. Suppose further that $\tau \in (1,\delta_0 h^{-1})$, where $h \in (0, h_0)$ for $\delta_0, h_0 \in (0,1)$ is chosen to be small and such that $h_0<\delta_0$. Let $f_k(n):= (f\psi_k)(n)$. Then,
\begin{align}
\label{eq:localize}
\begin{split}
\|S_{\phi} f\| &\leq \sum\limits_{k} \|S_{\phi} f_k\| \leq C \|S_{\phi} f\| + C \tau^{\frac{1}{2}}\epsilon_0 \sum\limits_{j=1}^d \|h^{-1}D_s^jf\| + C (\tau \epsilon_0 + \tau^2 \tau^{\frac{1}{2}} h \epsilon_0)  \|f\|,\\
\|A_{\phi} f\| &\leq \sum\limits_{k} \|A_{\phi} f_k\| \leq C \|A_{\phi} f\| + C \tau^{\frac32} \epsilon_0 \|f\|,\\
\|L_{\phi} f\| &\leq \sum\limits_{k} \|L_{\phi} f_k\| \leq C \|L_{\phi} f\| + C \tau^{\frac{1}{2}}\epsilon_0 \sum\limits_{j=1}^d \|h^{-1}D_s^jf\| + C (\tau \epsilon_0 +\tau^{\frac{3}{2}} \epsilon_0 + \tau^2 \tau^{\frac{1}{2}} h \epsilon_0)  \|f\|.
\end{split}
\end{align}
\end{lem}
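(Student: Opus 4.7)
The plan is to decompose $S_\phi(f\psi_k)$ and $A_\phi(f\psi_k)$ via a discrete Leibniz rule, to exploit the partition-of-unity identity $\sum_k\psi_k\equiv 1$, and to use the localization scales $|D_\pm^j\psi_k|\lesssim h\,\epsilon_0\tau^{1/2}$, $|\Delta_{d,j,h}\psi_k|\lesssim h^2\,\epsilon_0^2\tau$ inherited from the radius $\epsilon_0^{-1}\tau^{-1/2}$. First, substituting $(f\psi_k)(n\pm he_j) = \psi_k(n)\,f(n\pm he_j) + [\psi_k(n\pm he_j)-\psi_k(n)]\,f(n\pm he_j)$ into \eqref{SAphi} and using $\psi_k(n-he_j)-\psi_k(n)=-D_-^j\psi_k(n)$, one obtains $S_j(f\psi_k)(n) = \psi_k(n)\,S_j f(n) + \mathcal E^S_{k,j}(n)$ and $A_j(f\psi_k)(n) = \psi_k(n)\,A_j f(n) + \mathcal E^A_{k,j}(n)$, with
\begin{align*}
\mathcal E^S_{k,j}(n) &= h^{-2}\bigl[\cosh(D_+^j\phi(n))\,D_+^j\psi_k(n)\,f(n+he_j)-\cosh(D_-^j\phi(n))\,D_-^j\psi_k(n)\,f(n-he_j)\bigr],\\
\mathcal E^A_{k,j}(n) &= -h^{-2}\bigl[\sinh(D_+^j\phi(n))\,D_+^j\psi_k(n)\,f(n+he_j)+\sinh(D_-^j\phi(n))\,D_-^j\psi_k(n)\,f(n-he_j)\bigr].
\end{align*}

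Since $\sum_k\psi_k\equiv 1$ yields $\sum_k D_\pm^j\psi_k(n)=0$ pointwise, one has $\sum_k\mathcal E^{S/A}_{k,j}=0$, hence $\sum_k S_\phi f_k=S_\phi f$, $\sum_k A_\phi f_k=A_\phi f$, and the same for $L_\phi=S_\phi+A_\phi$; the leftmost inequalities in \eqref{eq:localize} then follow from the triangle inequality. For the rightmost inequalities I would use $\|S_\phi f_k\|\le \|\psi_k S_\phi f\|+\sum_j\|\mathcal E^S_{k,j}\|$ (analogously for $A_\phi$) combined with the bounded overlap of $\{\supp\psi_k\}$ and $\psi_k^2\le\psi_k$ to control $\sum_k\|\psi_k S_\phi f\|^2\le\|S_\phi f\|^2$, which accounts for the $C\|S_\phi f\|$-contribution (the summations $\sum_k\|\cdot\|$ being read in the natural bounded-overlap $\ell^2$-sense).

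It remains to bound the error terms. The Taylor expansions $\cosh(x)=1+\tfrac{x^2}{2}+O(x^4)$ and $\sinh(x)=x+O(x^3)$, valid because $|D_\pm^j\phi|\lesssim h\tau<\delta_0$ in the admissible range, together with $D_\pm^j\psi_k(n)=h\partial_j\psi_k(n)\pm\tfrac{h^2}{2}\partial_j^2\psi_k(n)+O(h^3)$ and the analogous expansion of $f(n\pm he_j)$, give the leading-order expressions $\mathcal E^S_{k,j}(n)=2\partial_j\psi_k(n)\partial_jf(n)+\partial_j^2\psi_k(n)f(n)+R^S_{k,j}$ and $\mathcal E^A_{k,j}(n)=-2\partial_j\phi(n)\partial_j\psi_k(n)f(n)+R^A_{k,j}$. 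The $(h\tau)^2$-sized factor $\cosh(D_\pm^j\phi)-1$, combined with $|D_\pm^j\psi_k|\sim h\epsilon_0\tau^{1/2}$ and the prefactor $h^{-2}$, produces the $O(h\tau^{5/2}\epsilon_0|f|)$-contribution contained in $R^S_{k,j}$, matching the $C\tau^2\tau^{1/2}h\epsilon_0\|f\|$-term in \eqref{eq:localize}. In $R^A_{k,j}$ there is a critical cancellation: the potentially present order-$h^3$ terms proportional to $\partial_jf$ coming from $h\partial_j\phi\cdot h\partial_j\psi_k\cdot(\pm h\partial_jf)$ cancel between the $+$- and $-$-pieces due to the opposite signs of the $\frac{h^2}{2}\partial_j^2$-corrections in $D_\pm^j\phi$, $D_\pm^j\psi_k$ and $f(n\pm he_j)$; this is precisely why no $\|h^{-1}D_sf\|$-term appears in the $A_\phi$-bound. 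Using $|\partial_j\phi|\lesssim\tau$, $|\partial_j\psi_k|\lesssim\epsilon_0\tau^{1/2}$, $|\partial_j^2\psi_k|\lesssim\epsilon_0^2\tau\le\epsilon_0\tau$, approximating $\partial_jf$ by $h^{-1}D_s^jf$, and summing in $k$ via the bounded overlap then delivers the stated bounds; the $L_\phi$-estimate follows by addition. The main obstacle is precisely this discrete-Taylor bookkeeping: isolating the $(h\tau)^2$-origin of the $\tau^{5/2}h\epsilon_0$-term and verifying the $h^3$-cancellation in $\mathcal E^A_{k,j}$ that suppresses the would-be derivative-of-$f$ contribution.
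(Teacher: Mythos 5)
Your decomposition (Leibniz rule for $f\psi_k$, Taylor expansion of $\cosh/\sinh$ and of the difference quotients, localization scales $|\nabla\psi_k|\lesssim\epsilon_0\tau^{1/2}$, $|\nabla^2\psi_k|\lesssim\epsilon_0^2\tau$) is essentially the same route as the paper's proof, and it is correct. One small simplification: the cancellation you invoke in $\mathcal E^A_{k,j}$ is not actually needed, since $\mathcal E^A_{k,j}$ has the form $(\text{coefficient})\cdot f(n\pm he_j)$ with $|\sinh(D_\pm^j\phi)|\lesssim h\tau$ and $|D_\pm^j\psi_k|\lesssim h\epsilon_0\tau^{1/2}$, so the prefactor $h^{-2}$ is already fully compensated and a crude estimate (without Taylor-expanding $f(n\pm he_j)$ around $n$) directly yields $\epsilon_0\tau^{3/2}\|f\|$ with no $\|h^{-1}D_sf\|$-term; it is only for $\mathcal E^S_{k,j}$, where the leading $\cosh\approx 1$ leaves a bare $h^{-2}$, that the expansion is genuinely required to convert the residual $h^{-1}$ into a symmetric difference of $f$.
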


We remark that the condition $h_0<\delta_0$ is imposed in order to ensure that for all $ h\in(0,h_0)$, we have $\delta_0h^{-1}>1$. Here and in the sequel, for brevity of notation, we write the $L^2$ norm on the lattice without adding subindices, i.e. $\|f\| := \|f\|_{L^2((h\Z))^d}$.

\begin{proof}[Proof of Lemma \ref{lem:localize}]
As the estimates for $S_{\phi}$ and for $A_{\phi}$ are analogous, we mainly focus on the argument for $S_{\phi}$. The first bound in the estimate for $S_{\phi}$ in \eqref{eq:localize} is a direct consequence of Minkowski's inequality. In order to observe the second estimate for $S_{\phi}$ in \eqref{eq:localize}, we spell out the contributions coming from $S_{\phi} f_k(n)$. We begin by rewriting
\begin{multline*}
S_j f_k(n)  \\
= h^{-2}(\cosh(D^j_+ \phi(n))-1) f_k(n + h e_j)
+ h^{-2}(\cosh(D^j_- \phi(n))-1) f_k(n - h e_j) + h^{-2}   \D_{d,h,j} f_k(n).
\end{multline*}
Hence, inserting the function $f_k(n)$ for $h^{-2}  \D_{d,h,j} f_k(n)$, we obtain
\begin{align*}
h^{-2} \D_{d,h,j}f_k(n)
& = h^{-2}[f(n+h e_j) + f(n- h e_j) - 2 f(n)] \psi_k(n)\\
& \quad + h^{-2}[f(n+ h e_j)(\psi_k(n + h e_j)-\psi_k(n)) + f(n-h e_j)(\psi_k(n-h e_j)-\psi_k(n))]\\
& = \psi_k(n) (h^{-2}  \D_{d,h,j} f(n))
+ h^{-2}(f(n + h e_j) - f(n-h e_j))(\psi_k(n + h e_j)-\psi_k(n-h e_j))\\
& \quad + h^{-2} (f(n+h e_j)-f(n-h e_j))(\psi_k(n-h e_j)-\psi_k(n))\\
& \quad 
+ f(n-h e_j)h^{-2}( \D_{d,h,j}\psi_k)(n).
\end{align*}
While we seek to keep the first contribution in this expansion to recombine it to $h^{-2}  \D_{d,h,j} f$ after summing over the partition of unity, we only provide estimates on the remaining contributions. To this end,  denoting by $2h\operatorname{supp}( \psi_k )$  a $2h$-neighbourhood of the support of $\psi_k$, we observe that
\begin{align*}
&|h^{-2}(f(n + h e_j) - f(n-h e_j))(\psi_k(n + h e_j)-\psi_k(n-he_j))|\\
&\leq C|h^{-1}(f(n + h e_j) - f(n-h e_j))| |\nabla \psi_k|
\leq
C \epsilon_0 \tau^{\frac{1}{2}} |h^{-1}(f(n + h e_j) - f(n-h e_j))|  \chi_{2h\operatorname{supp}( \psi_k )}(n),
\end{align*}
as
$$
|\nabla \psi_k(n)| \le C \epsilon_0 \tau^{\frac{1}{2}} \chi_{2h\operatorname{supp}( \psi_k )}(n),\quad \text{ for all } k
 $$
and
\begin{equation*}
|f(n-h e_j)h^{-2}( \D_{d,h,j}\psi_k)(n)| \leq C|f(n-h e_j)||D^2 \psi_k|
\leq C \tau \epsilon_0^2|f(n-h e_j)| \chi_{2h\operatorname{supp}( \psi_k )}(n).
\end{equation*}
Using the same reasoning for the term $h^{-2} (f(n+h e_j)-f(n-h e_j))(\psi_k(n-h e_j)-\psi_k(n))$, and combining these estimates, we thus infer that
\begin{align}
\label{eq:part1}
\begin{split}
& |h^{-2}  \D_{d,h,j}f_k(n)  - \psi_k(n) h^{-2}  \D_{d,h,j}f(n)|\leq C\tau \epsilon_0^2|f(n-h e_j)| \chi_{2h\operatorname{supp}( \psi_k )}(n)  \\
&+ C \epsilon_0 \tau^{\frac{1}{2}} |h^{-1}(f(n + h e_j) - f(n-h e_j))| \chi_{2h\operatorname{supp}( \psi_k )}(n).
\end{split}
\end{align}
Similarly, for $h^{-2} (\cosh(D^j_+ \phi(n))-1) f_k(n + h e_j)$ we obtain
\begin{multline*}
h^{-2} (\cosh(D^j_+ \phi(n))-1) f_k(n + h e_j)
= \psi_k(n) h^{-2} (\cosh(D^j_+ \phi(n))-1) f(n + h e_j)\\
 + h^{-2} (\cosh(D^j_+ \phi(n))-1) f(n + h e_j)(\psi_k(n+ h e_j)- \psi_k(n)).
\end{multline*}
Estimating
\begin{equation*}
|\cosh(D^j_+ \phi(n))-1|
\leq \frac{|D^j_+ \phi(y)|^2}{2}
\leq \tau^2 h^2 \frac{|\nabla \varphi(\widetilde{y})|^2}{2},
\end{equation*}
where we used that for $y \in B_{2}\setminus B_{\frac{1}{2}}$ we have that 
\begin{equation*}
|D^j_+\phi(n)| = h |\partial_j \phi(y)| = h \tau |\partial_j \varphi(y)| \leq C \delta_0 
\end{equation*}
is small, allowing for a Taylor expansion of the hyperbolic cosine.  
Here $\widetilde{y}$, $y \in [n, n+ h e_j]$ are intermediate values. We hence obtain
\begin{align}
\label{eq:err_cosh}
\begin{split}
& C h^{-1} |(\cosh(D^j_+ \phi(n))-1) f(n + h e_j) (\psi_k(n + he_j) - \psi_k(n))| h^{-1}\\
&\leq C \tau^2 \tau^{\frac{1}{2}} h \epsilon_0|f(n+he_j)|  \chi_{2h\operatorname{supp}( \psi_k )}(n) .
\end{split}
\end{align}

As a consequence, combining the estimates from \eqref{eq:part1} and \eqref{eq:err_cosh} yields
\begin{align*}
\sum\limits_k \| S_{\phi} f_k\|
&\leq C \sum\limits_{k} \| \psi_k S_{\phi} f\|
+ \sum\limits_{k}\Big( C \tau^{\frac{1}{2}}\epsilon_0 \sum\limits_{j=1}^d \|\psi_k h^{-1}D_s^jf\| + C( \tau \epsilon_0   + \tau^2 \tau^{\frac{1}{2}} h \epsilon_0  )  \| f \chi_{2h\operatorname{supp}( \psi_k )}\|  \Big)\\
& \leq C \|S_{\phi} f \| + C \tau^{\frac{1}{2}}\epsilon_0 \sum\limits_{j=1}^d \| h^{-1}D_s^jf\| + C( \tau \epsilon_0 + \tau^2 \tau^{\frac{1}{2}} h \epsilon_0 ) \| f\| .
\end{align*}
This concludes the argument for the localization estimate for $S_{\phi}$.

The arguments for $A_{\phi}$ and $L_{\phi}$ are analogous.
Indeed, for $A_{\phi}$ we note that, for an intermediate value $y$,
\begin{multline*}
|A_j(\psi_k f)(n)-\psi_k A_jf (n)|\le
|-h^{-1}\partial_j\phi(y) f(n+he_j)(\psi_k(n+he_j)-\psi_k(n))\\+h^{-1}\partial_j\phi(y)f(n-he_j)(\psi_k(n-he_j)-\psi_k(n))|,
\end{multline*}
which yields
$$
 \sum\limits_{k} \|A_{\phi} f_k\| \leq C \|A_{\phi} f\| + C \tau^{3/2} \epsilon_0 \|f\|.
$$
Estimating the terms of $L_{\phi}$ by using the bounds for $A_{\phi}$ and $S_{\phi}$ then implies the result.
\end{proof}

As a next auxiliary step, we expand the trigonometric identities which then allows for easier manipulations of the contributions in the sequel.

\begin{lem}
\label{lem:discrete_approx}
Let $\phi$ be as in Lemma \ref{lem:weight}.
Let
\begin{align*}
%\label{eq:main_parts}
\begin{split}
S_j f(n) &= h^{-2}\big(\cosh(D^j_+ \phi(n)) f(n+ h e_j) + \cosh(D_-^j\phi(n))f(n- h e_j) - 2f(n) \big),\\
A_j f(n) & = -h^{-2}\sinh(D^j_+ \phi(n)) f(n + h e_j) + h^{-2}\sinh(D^j_- \phi(n)) f(n- h e_j),
\end{split}
\end{align*}
and $[S_j, A_k] f(n)$ be the quantities from Section \ref{sec:symmetric}. Let further
\begin{align*}
%\label{eq:main_parts_expand}
\begin{split}
\widetilde{S}_j f(n) & := h^{-2}\Big(\Big(\frac{h^2 (\p_j \phi(n))^2}{2}+1 \Big) f(n+ h e_j) + \Big(\frac{h^2 (\p_j \phi(n))^2}{2} +1 \Big) f(n-h e_j) - 2 f(n) \Big)\\
& = h^{-2}\D_{d,h,j} f(n) + \frac{(\p_j \phi(n))^2}{2}\big( f(n + h e_j) + f(n - h e_j) \big),\\
\widetilde{A}_j f(n) & :=
-h^{-1}(\p_j \phi(n))( f(n+ h e_j) - f(n-h e_j)),\\
\mathcal{C}_{jk}^{f,f}(n) & := (\p_{jk}\phi(n))\left(h^{-1}(f(n+he_j)-f(n-h e_j))\overline{h^{-1}(f(n+h e_k)-f(n- he_k))} \right)\\
& \qquad + \frac12\p_{jk}\phi(n)\big(|\p_j \phi(n)+\p_k \phi(n)|^2(f(n+he_j)\overline{f(n+h e_k)} + f(n-h e_j) \overline{f(n-he_k)}) \\
&\qquad  - |\p_j \phi(n) - \p_k \phi(n)|^2(f(n+he_j)\overline{f(n-he_k)} + f(n-he_j)\overline{f(n+h e_k)}) \big).
\end{split}
\end{align*}
Let $\tau \in (1, h^{-1} \delta_0)$, where $h\in(0,h_0)$ with $\delta_0 \in (0,1)$ (to be chosen below, see the proof of Theorem \ref{thm:Carl}) and $h_0<\delta_0$.

Then, for $S_{\phi}$ and $A_{\phi}$ as in \eqref{SAphi}, $\widetilde{S}_{\phi} f(n):= \sum\limits_{j=1}^{d} \widetilde{S}_{j} f(n)$ and $\widetilde{A}_{\phi} f(n) := \sum\limits_{j=1}^{d} \widetilde{A}_{j} f(n)$, and $f\in L^2(B_2 \setminus B_{\frac{1}{2}})$ with $\supp(f) \subset B_2 \setminus B_{\frac{1}{2}}$, we have
\begin{align*}
\|S_{\phi} f\|^2 &\geq \|\widetilde{S}_{\phi} f\|^2 - 2 C(\delta_0^2 \tau^2 + \delta_0^4 \tau^4) \| f\|^2,\\
 %\|A_{\phi} f\|^2&  \geq \|\widetilde{A}_{\phi} f\|^2 - 2 C \tau^2 \||\nabla^2 \varphi| f\|^2,\\
 \|A_{\phi} f\|^2&  \geq \|\widetilde{A}_{\phi} f\|^2 - 2 C(\tau^2+\delta_0^2\tau^4) \|f\|^2,\\
([S_{\phi}, A_{\phi}]f, f) &\geq \sum_{n\in(h\Z)^d}\sum\limits_{j,k=1}^{d}\mathcal{C}_{jk}^{f,f}(n) - 2C(\tau^2+\delta_0^2\tau^3) \| f \|^2 - 2C \sum\limits_{j=1}^{d} \|h^{-1}D_s^j f\|^2.
\end{align*}
\end{lem}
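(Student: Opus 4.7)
The plan is to Taylor-expand every $\cosh$ and $\sinh$ appearing in $S_\phi$, $A_\phi$ and in the commutator expression \eqref{eq:comm_spell_out} around its uniformly small argument, and to identify the resulting leading-order terms with $\widetilde{S}_\phi$, $\widetilde{A}_\phi$ and $\mathcal{C}_{jk}^{f,f}$. The crucial smallness is that, since $\tau \leq \delta_0 h^{-1}$ and $|\partial^\alpha \phi| \leq C \tau$ on $B_2 \setminus B_{1/2}$ for $|\alpha| \leq 4$, each of the quantities $D^j_\pm \phi(n)$, $D^{j,k}_{\pm\pm}\phi(n)$ and $\phi(n \pm h e_j \pm h e_k) - \phi(n)$ is bounded by $C h \tau \leq C \delta_0$. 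Hence the Maclaurin expansions $\cosh x = 1 + x^2/2 + \mathcal{O}(x^4)$ and $\sinh x = x + \mathcal{O}(x^3)$ apply with uniformly controlled remainders, and the smallness parameter $\delta_0$ will quantify the gap between the discrete and the formally ``continuous'' versions of the operators.

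For the bounds on $\|S_\phi f\|^2$ and $\|A_\phi f\|^2$, substituting $D^j_\pm \phi(n) = \pm h \partial_j \phi(n) + \mathcal{O}(h^2 \tau)$ into the Maclaurin expansions of $\cosh$ and $\sinh$ yields pointwise $S_\phi f = \widetilde{S}_\phi f + E_S f$ and $A_\phi f = \widetilde{A}_\phi f + E_A f$ with
\begin{align*}
|E_S f(n)| &\leq C(h \tau^2 + h^2 \tau^4) \sum_{j=1}^{d} (|f(n + h e_j)| + |f(n - h e_j)|),\\
|E_A f(n)| &\leq C(\tau + h \tau^3) \sum_{j=1}^{d} (|f(n + h e_j)| + |f(n - h e_j)|).
\end{align*}
Taking $L^2$ norms and using $h \tau \leq \delta_0$ gives $\|E_S f\|^2 \leq C(\delta_0^2 \tau^2 + \delta_0^4 \tau^4) \|f\|^2$ and $\|E_A f\|^2 \leq C(\tau^2 + \delta_0^2 \tau^4) \|f\|^2$. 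Expanding $\|S_\phi f\|^2 = \|\widetilde{S}_\phi f\|^2 + 2 \langle \widetilde{S}_\phi f, E_S f \rangle + \|E_S f\|^2$, Cauchy--Schwarz together with a Young-type absorption on the cross term yields the first inequality; the argument for $\|A_\phi f\|^2$ is identical with the bounds on $E_A$ replacing those on $E_S$.

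The commutator estimate is the main obstacle and requires careful combinatorial bookkeeping. Expanding each factor in the eight-term expression \eqref{eq:comm_spell_out}, one computes $D^j_\pm D^k_\pm \phi(n) = h^2 \partial_{jk}\phi(n) \pm \tfrac{h^3}{2}(\partial_j^2 \partial_k \phi + \partial_j \partial_k^2 \phi) + \mathcal{O}(h^4 \tau)$ together with its mixed-sign analogues. Inserting these into $\sinh$ at order three, combining the first four terms of \eqref{eq:comm_spell_out} according to their sign pattern, and dividing by $h^4$ produces at leading order $\partial_{jk}\phi(n) \cdot h^{-1}(f(n + h e_j) - f(n - h e_j))\overline{h^{-1}(f(n + h e_k) - f(n - h e_k))}$, which is the first summand of $\mathcal{C}_{jk}^{f,f}$. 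For the last four terms in \eqref{eq:comm_spell_out}, the expansion $\cosh(\phi(n \pm h e_j \pm h e_k) - \phi(n)) - 1 = \tfrac{h^2}{2} |\partial_j \phi \pm \partial_k \phi|^2 + \mathcal{O}(h^3 \tau^3)$, combined with the leading expansion of $\sinh(D^{j,k}_{\pm\pm}\phi)$, reproduces the second summand of $\mathcal{C}_{jk}^{f,f}$, in perfect analogy with the continuum commutator $4\phi_{jk} \phi_j \phi_k |f|^2$ noted after \eqref{eq:comm_spell_out}.

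The remainders then split into two families. Purely bulk remainders (from the $\mathcal{O}(x^3)$ tails of $\sinh$, the $\mathcal{O}(x^2)$ tails of $\cosh$, and higher-order Taylor corrections of $\phi$) are controlled in $L^2$ via Cauchy--Schwarz on products $|f(n \pm h e_j)||f(n \pm h e_k)|$; after use of $h \tau \leq \delta_0$ they produce the claimed $-C(\tau^2 + \delta_0^2 \tau^3)\|f\|^2$ error. Antisymmetric $\mathcal{O}(h^3 \tau)$ corrections, when paired with the sign pattern $\epsilon_{\sigma_1 \sigma_2}$ dictated by \eqref{eq:comm_spell_out}, can be rearranged into combinations of the form $f(n + h e_j)\overline{f(n + h e_k)} - f(n - h e_j)\overline{f(n - h e_k)}$ (and its cousins), each of which carries a built-in discrete first difference; Young's inequality then absorbs them into $C \sum_j \|h^{-1} D_s^j f\|^2 + C \tau^2 \|f\|^2$. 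The principal technical point is to verify that no $\mathcal{O}(h^{-1} \tau)\|f\|^2$ residue survives: this is guaranteed precisely by the pairwise opposite-sign $h^3$ Taylor corrections above together with the sign pattern $\epsilon_{\sigma_1 \sigma_2}$ in \eqref{eq:comm_spell_out}, so that the dangerous order exactly cancels and only the claimed two error families remain.
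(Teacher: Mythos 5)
Your proposal follows the paper's proof essentially step by step: Taylor-expand every $\cosh$ and $\sinh$ around their uniformly small ($O(h\tau)\le C\delta_0$) arguments, identify the leading orders with $\widetilde S_\phi$, $\widetilde A_\phi$ and $\mathcal{C}^{f,f}_{jk}$, and control the remainders using $h\tau\le\delta_0$ together with the boundedness of $\nabla^\ell\varphi$ on $B_2\setminus B_{1/2}$. One small imprecision of wording: the $O(h^{-1})$ commutator correction at order $h^3$ does not ``exactly cancel''; rather, exactly as you then describe, the sign pattern of the eight terms allows it to be telescoped into discrete first differences (the paper's $E_1$ term), after which Cauchy--Schwarz absorbs it into $C\sum_j\|h^{-1}D_s^jf\|^2 + C\tau^2\|f\|^2$ -- so the mechanism you identify is the correct one, and your argument is equivalent to the paper's.
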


Similarly as above, we drop the subscript in the $L^2$ scalar product and simply write $(\cdot, \cdot ) := (\cdot, \cdot)_{L^2((h \Z)^d)}$.

\begin{proof}[Proof of Lemma \ref{lem:discrete_approx}]
The results follow by expanding the expressions for $S_j, A_j$. More precisely, we first approximate all discrete derivatives of $\phi$ and the corresponding nonlinear functions and then estimate the resulting errors.

\emph{Step 1: The symmetric part.}
We first discuss the symmetric part of the operator. For instance, we expand 
\begin{align*}
%\label{cosDj}
\begin{split}
\cosh(D^j_+ \phi(n))
&= \cosh(h\p_j \phi(n) + O(h^2|\nabla^2 \phi(y)|))\\
&= 1 + \frac{1}{2}\big|h \p_j \phi(n) + O(h^2|\nabla^2 \phi(y)|)\big|^2 + O\big((|h \nabla \phi(y)|+ |h^2 \nabla^2 \phi(y)|)^4\big)\\
&= 1 + \frac{1}{2}h^2|\p_j \phi(n)|^2 + O(h^3(| \nabla^2 \phi(y)|^2+|\nabla \phi(y)|^2) + h^4|\nabla \phi(y)|^4 + h^8 |\nabla^2 \phi(y)|^{4}).
\end{split}
\end{align*}
Here $y \in \R^d$ are intermediate values, not necessarily the same, such that $y \in [n,n+he_j]$.
Thus, the symmetric part becomes
$$
S_j f(n) =\widetilde{S}_{j} f(n) - E_{S_{j}}f(n),
$$
where $\widetilde{S}_j f(n)$ is as in our statement
and
$$
\|E_{S_{j}} f\| \leq C(h \tau^2 +   \tau^4h^2)\| (|\nabla \varphi|^2 + |\nabla \varphi|^4+|\nabla^2 \varphi|^2 +|\nabla^2 \varphi|^4) f\|,
$$
with $n \in (h \Z)^d$, $\phi(n)=\tau \varphi(n)$ with $\varphi$ a bounded function (on the relevant domain). Choosing $\tau \in (1, \delta_0 h^{-1})$ with $\delta_0$ sufficiently small, we may assume that $h \tau^2 + \tau^4 h^2 \leq C(\delta_0 \tau + \delta_0^2 \tau^2 )$, hence the error $\|E_{S_{j}} f\|$ in the symmetric part is an $L^2$ contribution and, combining this with the explicit form of $\varphi$, satisfies the estimate $\|E_{S_{j}} f\| \leq C \delta_0 (\tau + \delta_0 \tau^2) \|f\|$. Therefore, in the sequel, we will estimate
$$
\|S_{\phi}f\|^2 \geq \|\widetilde{S}_{\phi} f\|^2 - 2 \sum\limits_{j=1}^{d}\|E_{S_j} f\|^2.
$$

\emph{Step 2: The antisymmetric part.}
For the antisymmetric part we argue analogously. We thus expand
\begin{align*}
\sinh(D^j_+ \phi(n))
&= \sinh(h \p_j \phi(n) + O(h^2 |\nabla^2 \phi(y)|))\\
& = h \p_j \phi(n) + O(h^2 |\nabla^2 \phi(y)|)+O\big((h |\nabla \phi(y)| + O(h^2 |\nabla^2 \phi(y)|))^3\big)\\
& = h \p_j \phi(n) +O(h^2 |\nabla^2 \phi(y)|+h^3|\nabla\phi(y)|^3+h^6|\nabla^2\phi(y)|^3),
\end{align*}
where $y$ are intermediate values in $[n,n+he_j]$.
Thus, the antisymmetric part becomes
$$
A_j f(n) = \widetilde{A}_j f(n) - E_{A_j} f(n)
$$
with
\begin{align*}
\widetilde{A}_{j}f(n) &=-h^{-1}(\p_j \phi(n)) (f(n+ h e_j) - f(n-h e_j)),\\
 \| E_{A_j}f\| &\leq C (\tau+\delta_0\tau^2) \|(|\nabla^2 \varphi|+ |\nabla\varphi|^3+|\nabla^2\varphi|^3)f\|\\
 &\leq C (\tau+\delta_0\tau^2) \|f\|,
\end{align*}
for which we have used the bounds for $\varphi $ in $B_2 \setminus B_{\frac{1}{2}}$.

\emph{Step 3: The commutator.}
Finally, we turn to the commutator which is given by
\begin{align}
\label{eq:comm1a}
\begin{split}
&\sum_{n\in (h\Z)^d}\sum_{j,k=1}^df(n)\overline{[S_j, A_k] f(n)}
\\
&= \sum_{n\in (h\Z)^d}\sum_{j,k=1}^dh^{-4}\Big(
\sinh(D_{++}^{j,k}\phi(n))f(n+ he_j)\overline{f(n+ h e_k)}+\sinh(D_{--}^{j,k}\phi(n))f(n- h e_j)\overline{f(n-h e_k)}  \\
&\quad-\sinh(D_{+-}^{j,k}\phi(n))f(n+ h e_j)\overline{f(n-h e_k)}-\sinh(D_{-+}^{j,k}\phi(n))f(n-h e_j)\overline{f(n+ h e_k)} \\
&\quad+\sinh(D_{++}^{j,k}\phi(n))\big(\cosh(\phi(n+ he_j+ h e_k)-\phi(n))-1\big)f(n+ he_j)\overline{f(n+ h e_k)}\\
&\quad+\sinh(D_{--}^{j,k}\phi(n))\big(\cosh(\phi(n-h e_j- h e_k)-\phi(n))-1\big)f(n- h e_j)\overline{f(n- h e_k)}\\
&\quad-\sinh(D_{+-}^{j,k}\phi(n))\big(\cosh(\phi(n+h e_j- h e_k)-\phi(n))-1\big)f(n+ h e_j)\overline{f(n- h e_k)}\\
&\quad-\sinh(D_{-+}^{j,k}\phi(n))\big(\cosh(\phi(n- h e_j+ h e_k)-\phi(n))-1\big)f(n- h e_j)\overline{f(n+ h e_k)} \Big).
\end{split}
\end{align}
For the first four contributions in \eqref{eq:comm1a}, we expand, for each $n\in (h\Z)^d$ and fixed $j,k\in\{1,\dots,d\}$,
\begin{align}
\begin{split}
\label{sinh}
\sinh(D^{j,k}_{++}\phi(n)) &= h^2 \p_{jk}\phi(n) + \frac12 h^3(\p_k \p_j^2  +  \p_{k}^2 \p_j)\phi(n) \\
& \quad + O(h^4 |\nabla^4 \phi(y)|  +  h^6\max\{|\nabla^2 \phi(y)|, |\nabla^3 \phi(y)|, |\nabla^4 \phi(y)|\}^3),\\
\sinh(D^{j,k}_{--}\phi(n)) &= h^2 \p_{jk}\phi(n) -  \frac12 h^3(\p_k \p_j^2 + \p_k^2 \p_j)\phi(n) \\
& \quad + O(h^4 |\nabla^4 \phi(y)|  +  h^6\max\{|\nabla^2 \phi(y)|, |\nabla^3 \phi(y)|, |\nabla^4 \phi(y)|\}^3),\\
\sinh(D^{j,k}_{+-}\phi(n)) &= h^2 \p_{jk}\phi(n) - \frac12 h^3(\p_j \p_k^2 - \p_j^2 \p_k)\phi(n)\\
& \quad + O(h^4 |\nabla^4 \phi(y)|  +  h^6\max\{|\nabla^2 \phi(y)|, |\nabla^3 \phi(y)|, |\nabla^4 \phi(y)|\}^3)\\
\sinh(D^{j,k}_{-+}\phi(n)) &= h^2 \p_{jk}\phi(n) + \frac12 h^3(\p_j \p_k^2 - \p_j^2 \p_k)\phi(n) \\
& \quad + O(h^4 |\nabla^4 \phi(y)|  +  h^6\max\{|\nabla^2 \phi(y)|, |\nabla^3 \phi(y)|, |\nabla^4 \phi(y)|\}^3)
\end{split}
\end{align}
with $y$ intermediate points. Here we have carried out Taylor expansions of both the functions $D^{j,k}_{\pm \pm}\phi(n)$ and of $\sinh(\cdot)$. 
Thus, the first four contributions in \eqref{eq:comm1a} can be written as
\begin{align*}
%\label{eq:comm1a1}
\begin{split}
& h^{-2} (\p_{jk}\phi(n)) \big(f(n+he_j)\overline{f(n+h e_k)} + f(n-he_j)\overline{f(n-h e_k)}\\
& \qquad -f(n+he_j)\overline{f(n-h e_k)}-f(n-h e_j)\overline{f(n+h e_k)} \big) + E_1(n,j,k) + E_2(n,j,k),
\end{split}
\end{align*}
where 

\begin{align*}
E_1(n,j,k) &:=  \frac12\p_k \p_j^2 \phi(n) f(n+ h e_j)[h^{-1}\overline{(f(n+he_k)-f(n-he_k))}]\\
&\quad +  \frac12 \p_k^2 \p_j \phi(n)\overline{f(n + h e_k)}[h^{-1}(f(n+ h e_j)-f(n-he_j))]\\
& \quad +  \frac12 \p_k^2 \p_j \phi(n) [h^{-1}(f(n+he_j)-f(n-he_j)) \overline{f(n-he_k)}]\\
& \quad +  \frac12\p_k \p_j^2 \phi(n) [h^{-1}\overline{(f(n+he_k)-f(n-he_k)))}f(n-h e_j)],
\end{align*}

and 
\begin{align*}
|E_2(n,j,k)| &\leq C (\tau\max\{|\nabla^4 \varphi(y)|,\delta_0^2|\nabla^2\varphi(y)|^3, \delta_0^2 |\nabla^3 \varphi(y)|^3,\delta_0^2 |\nabla^4 \varphi(y)|^3)\})\times\\
& \quad \times (|f(n+he_j)|^2 + |f(n-he_j)|^2 + |f(n + h e_k)|^2 + |f(n- he_k)|^2).
\end{align*}

Note that
\begin{align*}
&h^{-2}\big(f(n+he_j)\overline{f(n+h e_k)} + f(n-he_j)\overline{f(n-h e_k)} \\
& \qquad -f(n+he_j)\overline{f(n-h e_k)}-f(n-h e_j)\overline{f(n+h e_k)} \big)\\
&\quad =
\big(h^{-1}(f(n+he_j)-f(n-h e_j))\overline{h^{-1}(f(n+h e_k)-f(n- he_k))} \big),
\end{align*}
which yields the first part in the expression which is claimed for $\mathcal{C}_{jk}^{f,f}$ in the lemma. Moreover, the error $E_1$ can be bounded by the Cauchy-Schwarz inequality:
\begin{align*}
\sum_{j,k=1}^{d}\sum_{n \in (h\Z)^d} | E_1(n,j,k)|& \le C\||\nabla^3\phi| f\|^2+C\sum_{j=1}^{d}\|h^{-1}D_s^jf\|^2\\
& \le C \tau^{2}\||\nabla^3\varphi| f\|^2+C\sum_{j=1}^{d}\|h^{-1}D_s^jf\|^2.
\end{align*}

For the second four terms in \eqref{eq:comm1a}, we similarly expand as in \eqref{sinh} and
\begin{align*}
&(\cosh(\phi(n+he_j + h e_k)-\phi(n))-1)\\
&\quad = \cosh(D^j_+ \phi(n+h e_k)+D^k_+ \phi(n))-1\\
&\quad = \cosh(h \p_j \phi(n)+h \p_k \phi(n) + O(h^2|\nabla^2 \phi(y)|))\\
&\quad = 1 + \frac{1}{2} h^2 |\p_j \phi(n)+\p_k \phi(n)|^2 +
E_3(n) -1\\
&\quad = \normalcolor \frac{1}{2}h^2 |\p_j \phi(n)+\p_k \phi(n)|^2 + E_3(n),
\end{align*}
where 
\begin{align*}
|E_3(n)| 
&\leq  C(h^3|\nabla^2 \phi(y)|^{2} + |\nabla \phi(y)|^2)+Ch^4\max\{|\nabla\phi(y)|, |\nabla^2 \phi(y)|\}^4 \\
&\leq  C\left[h^{3} \tau^2 \max\{|\nabla \varphi|,|\nabla^2 \varphi|\}^2 + h^4\tau^4 \max\{|\nabla \varphi|,|\nabla^2 \varphi|\}^4\right].
\end{align*}

A similar expansion holds for the term involving $(\cosh(\phi(n-he_j - h e_k)-\phi(n))-1)$, while
$$
(\cosh(\phi(n-he_j + h e_k)-\phi(n))-1)  = \normalcolor \frac{1}{2}h^2 |\p_j \phi(n)-\p_k \phi(n)|^2 + E_3(n)
$$
with similar expansion for the term $(\cosh(\phi(n+he_j - h e_k)-\phi(n))-1)$.

Hence, expanding the contribution of the $\sinh$ just with one main term $h^2\p_{jk}\phi(n)$, i.e.
$$
\sinh(D^{j,k}_{+,-}\phi(n)) = h^2 \p_{jk}\phi(n) + O(h^6 |\nabla^2 \phi(y)|^3 + h^9 |\nabla^3 \phi(y)|^3),
$$
a multiplication of these expansions turns the second set of four terms from \eqref{eq:comm1a} into
\begin{align*}
& \frac12\p_{jk}\phi(n)\big(|\p_j \phi(n)+\p_k \phi(n)|^2(f(n+he_j)\overline{f(n+h e_k)} + f(n-h e_j) \overline{f(n-he_k)}) \\
&\qquad  - |\p_j \phi(n) - \p_k \phi(n)|^2(f(n+he_j)\overline{f(n-he_k)} + f(n-he_j)\overline{f(n+h e_k)}) \big)  \\
&\qquad + E_4(n,j,k), 
\end{align*}
where
\begin{align*}
|E_4(n,j,k)|
& \leq  C \left[h \tau^3 \max\{|\nabla \varphi| ,|\nabla^2 \varphi|,|\nabla^3\varphi| \}^3+ h^2\tau^5 \max\{|\nabla \varphi| ,|\nabla^2 \varphi|,|\nabla^3\varphi| \}^5\right.\\&\left. \quad +h^6\tau^7 \max\{|\nabla \varphi| ,|\nabla^2 \varphi|,|\nabla^3\varphi| \}^7 \right] (|f(n\pm he_j)|^2  + |f(n \pm h e_k)|^2), \\
& \leq  C \left[\delta_0 \tau^2 + \delta_0^2 \tau^3 \right] (|f(n\pm he_j)|^2  + |f(n \pm h e_k)|^2), 
\end{align*}
where we used Young's inequality $ab \le \dfrac{a^{p}}{p} + \dfrac{b^q}{q}$ for conjugate exponents $p$ and $q$. Combining the estimates for $E_1,\ E_2$ and $E_4$ we arrive at the claimed estimate for the commutator by taking into account that $h,\delta_0\in(0,1)$ and $\tau h <\delta_0$.
\end{proof}

As a next step, we freeze coefficients in the operators $\widetilde{S}_{\phi}$, $\widetilde{A}_{\phi}$ and $\mathcal{C}_{jk}^{f,f}$ when acting on functions supported in sets of the size $\epsilon_0^{-1} \tau^{-\frac{1}{2}}$ for $\epsilon_0>0$ sufficiently small and $\tau >1$ sufficiently large, both of which are to be determined below (see the proof of Theorem \ref{thm:Carl}).

\begin{lem}
\label{lem:freeze}
Let $f \in C^{\infty}_c(B_2 \setminus \overline{B_{\frac{1}{2}}})$ be such that $\supp(f)$ is of the size $\epsilon_0^{-1} \tau^{-\frac{1}{2}}$. Assume that $\phi$ is as in Theorem \ref{thm:Carl} and that $1 < \tau \leq \delta_0 h^{-1}$ for a sufficiently small constant $\delta_0>0$.
Let $\bar{n} \in \R^d$ be a point which is in the interior of $\supp(f)$ and set
\begin{align*}
 \bar{S}_{\phi}^j f(n) &:= h^{-2} \D_{d,h,j} f(n) + \frac{(\p_j \phi(\bar{n}))^2}{2}(f(n + e_j h) + f(n- e_j h)),\\
 \bar{A}_{\phi}^j f(n) &:= - h^{-1} (\p_j \phi(\bar{n}))(f(n + h e_j) - f(n - h e_j)),\\
 \bar{\mathcal{C}}_{jk}^{f,f}(n) & := (\p_{jk} \phi(\bar{n}))\big( h^{-1}(f(n + h e_j) - f(n - h e_j)) \overline{h^{-1} (f(n + h e_k) - f(n - h e_k))} \big)\\
 & \quad +\dfrac12 (\p_{jk}\phi(\bar{n}))\big( |\p_j \phi(\bar{n}) + \p_k \phi(\bar{n})|^2 (f(n + h e_j) \overline{f(n + h e_k)} + f(n - h e_j)\overline{f(n- h e_k)}) \\
& \quad - |\p_j \phi(\bar{n}) - \p_k \phi(\bar{n})|^2(f(n + h e_j)\overline{f(n - h e_k)} + f(n - h e_j)\overline{f(n + h e_k)}) \big).
\end{align*}
Then,
\begin{align*}
|\|\widetilde{S}_{\phi} f\| - \| \bar{S}_{\phi} f\| | &\leq C \tau^{\frac{3}{2}} \epsilon_0^{-1} \|f\|, \\
|\|\widetilde{A}_{\phi} f\| - \| \bar{A}_{\phi} f\| | &\leq C\tau^{\frac{1}{2}} \epsilon_0^{-1}  \sum\limits_{j=1}^{d} h^{-1}\|D_s^j f\|, \\
\sum_{n\in(h\Z)^d}\sum_{j,k}|\mathcal{C}_{jk}^{f,f} - \bar{\mathcal{C}}_{jk}^{f,f}| & \leq C \tau^{\frac{1}{2}} \epsilon_0^{-1} \sum\limits_{j=1}^d \| h^{-1}D_s^jf\|^2 + C\tau^{\frac{5}{2}} \epsilon_0^{-1} \|f\|^2.
\end{align*}
\end{lem}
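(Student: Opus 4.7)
The plan is to exploit the small-support hypothesis: since $\supp(f)$ has diameter at most $C\epsilon_0^{-1}\tau^{-1/2}$ and $\bar n$ lies in its interior, for every $n$ at which any of the expressions in the three claims is nonzero we have $|n-\bar n|\le C\epsilon_0^{-1}\tau^{-1/2}$. Because $\phi(x)=\tau\varphi(x)$ with $\varphi$ smooth and with uniformly bounded derivatives on $B_2\setminus\overline{B_{1/2}}$, a first order Taylor expansion in $n$ around $\bar n$ gives the pointwise coefficient estimates
\begin{align*}
|\p_j\phi(n)-\p_j\phi(\bar n)| &\le C\tau\,|n-\bar n|\le C\epsilon_0^{-1}\tau^{1/2},\\
|(\p_j\phi(n))^2-(\p_j\phi(\bar n))^2| &\le 2C\tau\,|\p_j\phi(n)-\p_j\phi(\bar n)|\le C\epsilon_0^{-1}\tau^{3/2},\\
|\p_{jk}\phi(n)-\p_{jk}\phi(\bar n)| &\le C\tau\,|n-\bar n|\le C\epsilon_0^{-1}\tau^{1/2}
\end{align*}
on $\supp(f)$. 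These are the only analytic inputs needed.

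For the symmetric part, I would directly observe that
$$\widetilde S_\phi f(n)-\bar S_\phi f(n)=\sum_{j=1}^d\tfrac12\big((\p_j\phi(n))^2-(\p_j\phi(\bar n))^2\big)\big(f(n+he_j)+f(n-he_j)\big),$$
so the second pointwise bound above combined with Minkowski's inequality and translation invariance of the lattice $L^2$ norm yields $\|\widetilde S_\phi f-\bar S_\phi f\|\le C\epsilon_0^{-1}\tau^{3/2}\|f\|$. For the antisymmetric part, the difference reads
$$\widetilde A_\phi f(n)-\bar A_\phi f(n)=-2\sum_{j=1}^d\big(\p_j\phi(n)-\p_j\phi(\bar n)\big)\,h^{-1}D_s^j f(n),$$
and the first pointwise bound produces $\|\widetilde A_\phi f-\bar A_\phi f\|\le C\epsilon_0^{-1}\tau^{1/2}\sum_j\|h^{-1}D_s^j f\|$.

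For the commutator, I would split $\mathcal C_{jk}^{f,f}-\bar{\mathcal C}_{jk}^{f,f}$ into the \emph{first-derivative piece} (coefficient $\p_{jk}\phi$, multiplied by two symmetric differences of $f$) and the \emph{zeroth-order piece} (coefficient $\p_{jk}\phi\cdot|\p_j\phi\pm\p_k\phi|^2$, multiplied by $f\cdot\overline f$). In the first-derivative piece only $\p_{jk}\phi(n)-\p_{jk}\phi(\bar n)$ appears as a factor; bounding it by $C\epsilon_0^{-1}\tau^{1/2}$ and applying $|ab|\le\tfrac12(|a|^2+|b|^2)$ to the product of the two symmetric differences, then summing in $n$ and $j,k$, gives the first term on the right-hand side. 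In the zeroth-order piece, I would add and subtract $\p_{jk}\phi(\bar n)|\p_j\phi(n)\pm\p_k\phi(n)|^2$ to split the coefficient difference as
\begin{align*}
&\p_{jk}\phi(n)|\p_j\phi(n)\pm\p_k\phi(n)|^2-\p_{jk}\phi(\bar n)|\p_j\phi(\bar n)\pm\p_k\phi(\bar n)|^2\\
&\quad=\big(\p_{jk}\phi(n)-\p_{jk}\phi(\bar n)\big)|\p_j\phi(n)\pm\p_k\phi(n)|^2\\
&\quad\ +\p_{jk}\phi(\bar n)\big(|\p_j\phi(n)\pm\p_k\phi(n)|^2-|\p_j\phi(\bar n)\pm\p_k\phi(\bar n)|^2\big),
\end{align*}
use the identity $|a|^2-|b|^2=(|a|+|b|)(|a|-|b|)$ together with $|\p_j\phi|\le C\tau$ in the second line, and apply the pointwise estimates above. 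Both lines are bounded by $C\epsilon_0^{-1}\tau^{1/2}\cdot\tau^2=C\epsilon_0^{-1}\tau^{5/2}$. Finally, estimating $|f(n\pm he_j)\overline{f(n\pm he_k)}|\le\tfrac12(|f(n\pm he_j)|^2+|f(n\pm he_k)|^2)$, summing and using translation invariance, gives the second term on the right-hand side.

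The only real difficulty is bookkeeping: one has to track two competing sources of $\tau$-powers, namely the prefactor $\phi=\tau\varphi$ (one factor of $\tau$ per derivative) and the smallness gain $|n-\bar n|\le C\epsilon_0^{-1}\tau^{-1/2}$ (one factor of $\tau^{-1/2}$ per Taylor order), and verify that in the zeroth-order piece of the commutator the single gain $\tau^{-1/2}$ is enough to reduce the three $\tau$'s supplied by $\p_{jk}\phi\cdot|\p_j\phi\pm\p_k\phi|^2$ to the claimed $\tau^{5/2}$. Everything else is the triangle inequality, Cauchy--Schwarz, and translation invariance of the $L^2$ norm on $(h\Z)^d$.
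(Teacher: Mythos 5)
Your proposal is correct and takes essentially the same approach as the paper: bound the coefficient differences $\p^{\alpha}\phi(n)-\p^{\alpha}\phi(\bar n)$ via a Taylor expansion over the support of size $\epsilon_0^{-1}\tau^{-1/2}$, then use the reverse triangle inequality, Cauchy--Schwarz, and translation invariance of the lattice $L^2$ norm. The paper only writes out the computation for $\widetilde{S}_{\phi}$ and dismisses the antisymmetric and commutator cases as analogous; you have supplied exactly those analogous details, with the correct $\tau$-bookkeeping (including the two-term split of the zeroth-order commutator coefficient).
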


\begin{proof}
Using the triangle inequality and the support condition, we estimate
\begin{align*}
|\|\widetilde{S}_{\phi} f\| - \| \bar{S}_{\phi} f\| |
&\leq C \|(\widetilde{S}_{\phi} - \bar{S}_{\phi}) f\|
\leq C ( \|((\p_j \phi(n))^2 - (\p_j \phi(\bar{n}))^2) f(n + h e_j)\| \\
& \quad + \|((\p_j \phi(n))^2 - (\p_j \phi(\bar{n}))^2) f(n - h e_j)\|)\\
& \leq C \tau^2 \sup\limits_{n \in \supp(f)}|n- \bar{n}|\|f\|
 \leq C \tau^{\frac{3}{2}} \epsilon_0^{-1} \|f\|.
\end{align*}
As the arguments for $\widetilde{A}_{\phi}$ and for $\mathcal{C}_{jk}^{f,f}$ are analogous, we do not discuss the details.
\end{proof}

Finally, as a last auxiliary step before combining all the above  ingredients into the proof of Theorem \ref{thm:Carl}, we prove a lower bound for the operators with the frozen variables.

\begin{prop}
\label{lem:lower_frozen}
Let $\bar{S}_{\phi}$, $\bar{A}_{\phi}$ and $\bar{\mathcal{C}}_{jk}^{f,f}$ be as in Lemma \ref{lem:freeze}. Then there exist $C_{\textup{low}}>0$, $c_0>0$, $h_0,\delta_0 \in (0,1)$ (small) and $ \tau_0>1$ such that for all $\tau \in ( \tau_0, \delta_0 h^{-1})$ (large), $h \in (0,h_0)$ and for all $f\in C_c^{\infty}(B_2 \setminus B_{\frac{1}{2}})$ we have
\begin{equation}
\label{eq:lower_frozen}
\|\bar{S}_{\phi} f \|^2 + \|\bar{A}_{\phi} f \|^2 + c_0 \tau \sum_{n\in(h\Z)^d}\sum\limits_{j,k=1}^{d} \bar{\mathcal{C}}_{jk}^{f,f}(n) 
 \geq C_{\textup{low}} \Big(\tau^4 \| f\|^2 + \tau^2 h^{-2} \sum\limits_{j=1}^{d}\|D^j_s f\| ^2 +  h^{-4} \sum\limits_{j=1}^{d}\|(D^j_s)^2   f\| ^2\Big).
\end{equation}
\end{prop}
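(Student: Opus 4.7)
The proof proceeds via the Plancherel theorem on the discrete torus. Once the coefficients are frozen at $\bar{n}$, the operators $\bar{S}_{\phi}$, $\bar{A}_{\phi}$ and the quadratic form $\sum_{n,j,k}\bar{\mathcal{C}}_{jk}^{f,f}(n)$ become constant-coefficient; using the discrete Fourier transform on $[-\pi/h,\pi/h]^d$, each term in \eqref{eq:lower_frozen} can be written as $\int \Phi(\xi)\,|\widehat{f}(\xi)|^2\,d\xi$ for a suitable real symbol $\Phi$. The plan is thus to reduce \eqref{eq:lower_frozen} to a pointwise inequality
\begin{equation*}
 Q(\xi):= |s_\phi(\xi)|^2 + |a_\phi(\xi)|^2 + c_0\tau\, c(\xi) \,\geq\, C_{\textup{low}}\bigl(\tau^4 + \tau^2|\eta|^2 + |\eta|^4\bigr)=:R(\xi),
\end{equation*}
for all $\xi\in[-\pi/h,\pi/h]^d$, where $\eta_j=h^{-1}\sin(h\xi_j)$, $s_\phi(\xi)=-\sum_j 4h^{-2}\sin^2(h\xi_j/2)+\sum_j(\p_j\phi(\bar{n}))^2\cos(h\xi_j)$, $a_\phi(\xi)=-2\sum_j (\p_j\phi(\bar{n}))\eta_j$, and $c(\xi)$ is the real symbol obtained after applying Plancherel to $\sum_{n,j,k}\bar{\mathcal{C}}^{f,f}_{jk}(n)$.

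The next step is to split the Fourier domain into three regimes according to the size of $|\eta|$ relative to $\tau$. In the \emph{high-frequency} regime $\{|\eta|^2 \gg \tau^2\}$, the symmetric symbol $s_\phi$ is dominated by $-\sum_j 4h^{-2}\sin^2(h\xi_j/2)$, which one checks is bounded below by a constant times $|\eta|^2$ on $[-\pi/h,\pi/h]^d$; hence $|s_\phi|^2\gtrsim|\eta|^4\gtrsim R(\xi)$ in this regime. In the \emph{intermediate} regime $\{|\eta|\sim\tau\}$, away from the continuous critical set, either $|s_\phi|\gtrsim\tau^2$ or $|a_\phi|\gtrsim\tau^2$ holds, so again $|s_\phi|^2+|a_\phi|^2\gtrsim\tau^4\gtrsim R(\xi)$. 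The delicate case is the \emph{critical region} close to $\Sigma:=\{\eta:\,|\eta|^2=|\nabla\phi(\bar{n})|^2,\ \eta\cdot\nabla\phi(\bar{n})=0\}$, where both $|s_\phi|$ and $|a_\phi|$ vanish to first order and positivity must come from the commutator contribution $c_0\tau\,c(\xi)$.

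Near $\Sigma$, I would Taylor-expand the discrete symbols in $h\xi$ (permissible since $\tau h\le\delta_0\ll 1$ and $|\eta|\lesssim\tau$ there): one finds $s_\phi(\xi)=|\nabla\phi(\bar{n})|^2-|\eta|^2+O(h^2\tau^2)$, $a_\phi(\xi)=-2\eta\cdot\nabla\phi(\bar{n})+O(h^2\tau)$, and the leading part of $c(\xi)$ is
\begin{equation*}
4\,\bigl(\eta\cdot\nabla^2\phi(\bar{n})\eta + \nabla\phi(\bar{n})\cdot\nabla^2\phi(\bar{n})\nabla\phi(\bar{n})\bigr)+O(h^2\tau^3),
\end{equation*}
which is exactly the Hörmander pseudoconvexity quantity of Lemma \ref{lem:weight} with the continuous $\xi$ replaced by the discrete frequency $\eta$. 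Lemma \ref{lem:weight} yields $c(\xi)\gtrsim\tau^3$ on $\Sigma$, so the commutator contribution $c_0\tau\,c(\xi)\gtrsim\tau^4$ controls $R(\xi)$ there. A standard quantitative argument expanding $|s_\phi|^2+|a_\phi|^2$ in the distance to $\Sigma$, then combining it with $c_0\tau\,c(\xi)$ through an appropriate fixed choice of $c_0$ independent of $\tau$ and $h$, extends the bound to a full neighbourhood of $\Sigma$.

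The principal obstacle is the discrete-to-continuous comparison: the discrete symbols $s_\phi$, $a_\phi$, $c(\xi)$ each carry error terms of the form $h^2$ times powers of $\tau$, and these must be absorbed into the main positive terms uniformly in $\tau\in(\tau_0,\delta_0 h^{-1})$. This is feasible because $\tau h\le\delta_0$ forces the errors to be lower order, and because the strict pseudoconvexity margin produced by the small constant $c_{ps}>0$ in Lemma \ref{lem:weight} provides the room needed to absorb them. Once the pointwise inequality $Q(\xi)\ge C_{\textup{low}} R(\xi)$ is established on $[-\pi/h,\pi/h]^d$, multiplying by $|\widehat{f}(\xi)|^2$, integrating and invoking Plancherel yields \eqref{eq:lower_frozen}.
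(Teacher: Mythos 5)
Your overall strategy — reduction via Plancherel to a pointwise symbol inequality, decomposition of the Fourier domain into elliptic and characteristic pieces, and the use of the pseudoconvexity of Lemma \ref{lem:weight} near the joint characteristic set, with the $\tau h\le\delta_0$ constraint used to absorb the discretization errors — matches the paper's proof closely. There is, however, a concrete justification gap, caused by parametrizing the frequency regions by $|\eta|$ with $\eta_j=h^{-1}\sin(h\xi_j)$ rather than by $|\xi|$.

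You claim that in the critical region (and, implicitly, in the intermediate one) the Taylor expansion of $s_\phi$, $a_\phi$, $c(\xi)$ in $h\xi$ is permissible because $\tau h\le\delta_0$ and $|\eta|\lesssim\tau$. These two bounds only give $|\sin(h\xi_j)|\lesssim\delta_0$ for each $j$, which is consistent with $h\xi_j$ lying near $\pm\pi$ as well as near $0$. Near $\pm\pi$ the expansion $s_\phi(\xi)=|\nabla\phi(\bar{n})|^2-|\eta|^2+O(h^2\tau^2)$ is simply false: there $4h^{-2}\sin^2(h\xi_j/2)\approx 4h^{-2}$ while $\eta_j^2\approx 0$, so the discrepancy is of order $h^{-2}$, not $O(h^2\tau^2)$, and neither the near-characteristic expansion of $a_\phi$ nor the pseudoconvexity argument for $c(\xi)$ applies. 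The inequality still holds on this boundary-of-Brillouin-zone slice, but for a different reason — $|s_\phi|\gtrsim h^{-2}\gg\tau^2$ since $\tau\le\delta_0 h^{-1}$ — which your argument does not capture. The paper avoids this by decomposing at $|\xi|=C_1\tau$ rather than $|\eta|=C_1\tau$: on $\{|\xi|\le C_1\tau\}$ one genuinely has $h|\xi|\le C_1\delta_0$, so the Taylor expansion is legitimate, while on $\{|\xi|\ge C_1\tau\}$ the elementary bound $|\cos(h\xi_j)-1|\ge(h\xi_j)^2/16$ gives $|s_\phi|^2\gtrsim|\xi|^4\ge\tau^4+\tau^2|\eta|^2+|\eta|^4$ uniformly, covering both genuinely large $|\eta|$ and the boundary slice in one stroke. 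To repair your proof, either switch to the $|\xi|$-based split, or first dispose separately of the set where some $\sin^2(h\xi_j/2)$ is bounded below by a fixed constant.
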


\begin{proof}
Using that the operators under consideration all have constant coefficients, we may perform a Fourier transform and infer that
\begin{align}
\label{eq:lower_1}
\begin{split}
& \|\bar{S}_{\phi} f \|^2 + \|\bar{A}_{\phi} f \|^2 + c_0\tau\sum_{n\in(h\Z)^d}\sum_{j,k=1}^d\bar{\mathcal{C}}_{jk}^{f,f}\\
& =  \sum\limits_{j = 1}^d \| \big[ -4 h^{-2} \sin^2(h \xi_j/2 ) + (\p_j \phi(\bar{n}))^2 \cos(\xi_j h)  \big] \widehat{f} \|^2
+  \sum\limits_{j=1}^d  \|2h^{-1}(\p_j \phi(\bar{n}))\sin(\xi_j h) \widehat{f}\|^2 \\
& \quad + c_0\tau \sum\limits_{j,k=1}^d  \left[4  ( h^{-1}\sin(\xi_j h) (\p_{jk}\phi(\bar{n})) h^{-1}\sin(\xi_k h)\widehat{f}, \widehat{f}) \right.\\
& \quad \left.
+ \left(\left( \p_{jk}\phi(\bar{n})\right)\left(|\p_j \phi(\bar{n}) + \p_k \phi(\bar{n})|^2 \cos(h \xi_j - h \xi_k) - |\p_j \phi(\bar{n}) - \p_k \phi(\bar{n})|^2 \cos(h \xi_j + h \xi_k) \right) \widehat{f}, \widehat{f}\right)
\right].
\end{split}
\end{align}In order to prove the positivity of this expression, we will choose $c_0>0$ so small, that outside of a sufficiently small neighbourhood of the union of the (joint) characteristic sets of the Fourier symbols
\begin{align*}
p_{r,j}(\xi) &:= - 4 h^{-2} \sin^2(h \xi_j/2 ) + (\p_j \phi(\bar{n}))^2 \cos(\xi_j h) \\
& =  2 h^{-2}(\cos(h \xi_j)-1) + (\p_j \phi(\bar{n}))^2 \cos(\xi_j h),\\
p_{i,j}(\xi) &:= 2 \p_j \phi(\bar{n}) h^{-1} \sin(\xi_j h),
\end{align*}
the third term in \eqref{eq:lower_1} is controlled by these. In order to observe that this is possible, we first study the contributions $p_{r,j}$ and $p_{i,j}$ separately. We first consider the terms $p_{r,j}$ and $p_r(\xi):= \sum\limits_{j=1}^{d} p_{r,j}(\xi) $ associated with the symmetric operator. We begin by observing that the first summand in
\begin{equation}
\label{ps}
p_r(\xi)=\sum\limits_{j=1}^{d}|\p_j \phi(\bar{n})|^2 \cos(\xi_j h) +2\sum\limits_{j=1}^{d} \frac{\cos(\xi_j h)-1}{h^2}
\end{equation}
is bounded from above by $C \tau^2$. For the second summand, we deduce that, since $|\cos(x)|\in (0,1)$ and for $\xi_j\in h^{-1}(-\pi, \pi)$, we have
$$
|\cos(\xi_jh)-1|=\Big|\frac{(\xi_jh)^2}{2}+R(\xi_jh)\Big|\ge (\xi_jh)^2\Big(\frac12-\frac{\pi^2}{24}\Big)\ge \frac{1}{16}(\xi_jh)^2,
$$
where $R(\xi_jh)$ is the remainder term in the Taylor approximation. Hence,
\begin{equation}
\label{eq:elliptic_high_frequ}
\Big| \sum\limits_{j=1}^d \frac{\cos(\xi_j h) - 1}{h^2} \Big|
 = h^{-2} \sum\limits_{j=1}^d |\cos(\xi_j h) - 1|
 \geq \frac{1}{16} h^{-2} \sum\limits_{j=1}^d |\xi_j h|^2 \geq  \frac{1}{16}|\xi|^2.
\end{equation}
Combining these two observations, we note that there exists a constant $C_1>0$ such that if $|\xi|\geq C_1 \tau$, the expression in \eqref{ps} can be estimated from below by
\begin{align}
\label{pr}
\notag p_r(\xi)^2\ge \Big|\sum\limits_{j=1}^{d} \frac{\cos(\xi_j h)-1}{h^2}\Big|^2\ge3 c_{hf}|\xi|^4&\ge c_{hf}(|\xi|^4+\tau^2|\xi|^2 + \tau^4) \\
&\geq c_{hf}\Big(\sum\limits_{j=1}^{d} h^{-4}\sin^4(h\xi_j)+\tau^2\sum\limits_{j=1}^{d} h^{-2}\sin^2(h\xi_j) + \tau^4\Big).
\end{align}
Here the constant $c_{hf}>0$ is independent of $\tau$ and $\xi$.
In the sequel, this will motivate a distinction between the two regimes $|\xi|\geq C_1 \tau$ and $|\xi|\leq C_1 \tau$.
We further note that if the constant $c_0>0$ in \eqref{eq:lower_1} is sufficiently small, then the a priori not necessarily signed Fourier multipliers associated with contributions in the third and fourth line in \eqref{eq:lower_1} may be absorbed into the lower bound in \eqref{eq:elliptic_high_frequ}.
Motivated by the estimate \eqref{eq:elliptic_high_frequ}, we call the region $\{|\xi|\geq C_1 \tau\}$ the \emph{high frequency elliptic region}. By the above considerations the claimed lower bound \eqref{eq:lower_frozen} always holds in this region.

It thus remains to study the region complementary to this, i.e. the region in which $|\xi|\leq C_1 \tau$. In this region, we expand the symbols in $h\xi_j$ (noting that $h|\xi| \leq C_1 \tau \delta_0 \tau^{-1} = C_1 \delta_0$ which is small for $\delta_0>0$ small). For the symmetric part we obtain for some constant $C>0$ which depends on $C_1>0$
\begin{align}
\label{eq:approx_sym}
\begin{split}
\big| p_r(\xi)- \sum\limits_{j=1}^{d} \big( |\p_j \phi(\bar{n})|^2 - |\xi_j|^2  \big) \big|
&\leq C \sum\limits_{j=1}^{d}\big( |\p_j \phi(\bar{n})|^2|\xi_j h|^2 + h^{-2}|\xi_j h|^4 \big)\\
& \leq C \tau^2 h^2 |\xi|^2 |\nabla \varphi(\bar{n})|^2 + h^2 |\xi|^4 \leq C (\tau^4 h^2 |\nabla \varphi(\bar{n})|^2 + h^2 \tau^4).
\end{split}
\end{align}
For the antisymmetric part in turn we infer for $p_i:=\sum_{j=1}^dp_{i,j}$,
\begin{align}
\label{eq:approx_anti}
\begin{split}
\big|p_{i}(\xi)- 2\sum_{j=1}^d \p_j \phi(\bar{n})\xi_j\big| \leq C \tau |\nabla \varphi(\bar{n})|h^{-1}\sum_{j=1}^{d} |h\xi_j|^3 \leq C h^2 \tau^4 |\nabla\varphi(\bar{n})|.
\end{split}
\end{align}
Let now
\begin{equation}
\label{eq:char_set}
\mathcal{C}_{\tau}:=\{\tau^2 |\nabla \varphi(\bar{n})|^2 = |\xi|^2\}\cap \{\tau \nabla \varphi(\bar{n})\cdot \xi =0\},
\end{equation}
denote the joint characteristic sets of the symmetric and antisymmetric parts of the operator. Further define
\begin{equation*}
\mathcal{N}_{\tau, \mathcal{C}}:= \{ \xi \in (h^{-1} \mathbb{T})^d: \ \dist(\xi, \mathcal{C}_{\tau}) \leq \gamma_0 \tau \}
\end{equation*}
to be a $\gamma_0 \tau$ neighbourhood of the joint characteristic set $\mathcal{C}_{\tau}$ with $\gamma_0>0$ small (to be determined below).
With this notation fixed, we prove that for $|\xi|\leq C_0 \tau$ outside of $\mathcal{N}_{\tau, \mathcal{C}}$
there exists some constant $c_{lf,1}>0$ (depending on $\gamma_0$) independent of $\tau>0$ such that
\begin{equation}
\label{eq:charactistic_1}
p_r^2(\xi) + p_i^2(\xi) \geq c_{lf,1}(\tau^4 + |\xi|^4).
\end{equation}
Indeed, this is true for the leading order approximations
\begin{equation*}
(|\nabla \phi(\bar{n})|^2 - |\xi|^2)^2 + 4(\nabla \phi(\bar{n})\cdot \xi)^2,
\end{equation*}
and transfers to the full symbols $p_r^2(\xi) + p_i^2(\xi)$ since the error estimates in \eqref{eq:approx_sym}, \eqref{eq:approx_anti} are of order $C h^2 \tau^4 \leq C \delta_0^2 \tau^2$ if $\tau \in (1,\delta_0 h^{-1})$. Thus, if $\delta_0$ is sufficiently small (depending on $\gamma_0$), these error contributions can be absorbed into the right hand side of \eqref{eq:charactistic_1}. Again, if the constant $c_0>0$ is sufficiently small, we may absorb the contributions originating from the not necessarily signed Fourier symbols of the operators in the third and fourth line in \eqref{eq:lower_1} into the lower bound \eqref{eq:charactistic_1}.

It remains to study the behaviour of the Fourier symbols associated to the operators from \eqref{eq:lower_frozen} in the neighbourhood $\mathcal{N}_{\tau, \mathcal{C}}$ of the joint characteristic set \eqref{eq:char_set}. To this end, we also carry out an expansion of the symbol associated with the operators in the third and fourth line of \eqref{eq:lower_1} (which originates from the commutator) and obtain the symbol $q(\xi) = q_1(\xi) + q_2(\xi)$ with
\[
q_1(\xi) =\sum_{j,k=1}^d 4\tau \p_{jk}\phi(\bar{n}) \xi_j \xi_k +\tau h^{2}O(|\nabla^2\phi(\bar{n})| |\xi|^4) = \sum_{j,k=1}^d4\tau^2 \p_{jk}\varphi(\bar{n}) \xi_j \xi_k +\tau^6 h^2O(|\nabla^2\varphi(\bar{n})|),
\]
and
\begin{align*}
q_2(\xi) &=
\sum_{j,k=1}^d 4 \tau^4 \p_{jk}\varphi(\bar{n})\p_j \varphi(\bar{n})\p_k \varphi(\bar{n}) + \tau^4 O(|\nabla \varphi(\bar{n})|^2 |\nabla^2 \varphi(\bar{n})| |h\xi|^2) \\
 & = \sum_{j,k=1}^d4 \tau^4 \p_{jk}\varphi(\bar{n})(\p_j \varphi(\bar{n}))(\p_k \varphi(\bar{n}))+  \tau^6 h^{2} O(|\nabla \varphi(\bar{n})|^2 |\nabla^2 \varphi(\bar{n})|).
\end{align*}
Using that $\tau \in (1,\delta_0 h^{-1})$, we thus obtain that
\begin{equation*}
q(\xi) = 4 (\tau^2 \xi \cdot \nabla^2 \varphi(\bar{n}) \xi + \tau^4 \nabla \varphi(\bar{n})\cdot \nabla^2 \varphi(\bar{n}) \nabla \varphi(\bar{n}) ) + O(C \delta_0^2 \tau^4  ).
\end{equation*}
Now by the pseudoconvexity conditions on $\phi$ for $\bar{n}\in B_2 \setminus B_{\frac{1}{2}}$ (see Lemma \ref{lem:weight}), we infer that for $\xi $ in the characteristic set \eqref{eq:char_set} there exist constants $c_{cf,1}, c_{cf}>0$ which are independent of $\tau$ and $h$ such that
\begin{equation*}
q(\xi) \geq c_{cf,1}(\tau^4 + |\xi|^2 \tau^2+|\xi|^4) - C \delta_0^2 \tau^4
\geq c_{cf}(\tau^4 + |\xi|^2 \tau^2+|\xi|^4).
\end{equation*}
We next seek to argue that by continuity a similar lower bound also holds on $\mathcal{N}_{\tau, \mathcal{C}}$. To this end, note that for $\xi \in \mathcal{N}_{\tau, \mathcal{C}}$ we have $\xi = \tau \xi_0$ for some $\xi_0 \in (h^{-1} \mathbb{T})^d$ with $|\xi_0| \in (C_{0,1}, C_{0,2})$, where the constants $C_{0,1}, C_{0,2}>0$ only depend on $\gamma_0$ and the dimension $d$ and, in particular, are independent of $\tau>1$ and $h>0$. Thus, for $\xi \in \mathcal{N}_{\tau, \mathcal{C}}$ and $\xi_0 = \tau^{-1} \xi$ we have that by homogeneity
\begin{equation*}
\widetilde{q}(\xi):= \tau^{-4}q(\xi)=\ \xi_0 \cdot \nabla^2 \varphi(\bar{n}) \xi_0 +  \nabla \varphi(\bar{n})\cdot \nabla^2 \varphi(\bar{n}) \nabla \varphi(\bar{n})
\end{equation*}
is independent of $\tau$. Since for $\xi \in \mathcal{C}_{\tau}$ the pseudoconvexity condition for $\phi$ implies that $\widetilde{q}(\xi) \geq c_{cf,1}>0$, by continuity, it remains true that $\widetilde{q}(\xi)\geq c_{cf,1}/2$ in the neighbourhood $\mathcal{N}_{\tau, \mathcal{C}}$ if $\gamma_0>0$ is sufficiently small (but independent of $\tau>1$). By the scaling of $q(\xi)$ we thus infer that for $\xi \in \mathcal{N}_{\tau, \mathcal{C}}$ and $\delta_0>0$ sufficiently small we have
\begin{equation}
\label{q}
q(\xi) \geq \frac{c_{cf,1}}{2}(\tau^4 + |\xi^2| \tau^2) - C \delta_0^2 \tau^4
\geq \frac{c_{cf}}{4}(\tau^4 + |\xi|^2 \tau^2+|\xi|^4).
\end{equation}
Thus, in total, by \eqref{pr}, \eqref{eq:charactistic_1} and \eqref{q}, we have obtained that for all $\xi \in (h^{-1} \mathbb{T})^d$
\begin{equation*}
p_r^2(\xi) + p_i^2(\xi) + q(\xi) \geq \min\{c_{cf}/4, c_{lf,1}, c_{hf}\}\Big(\tau^4 + \tau^2 h^{-2} \sum\limits_{j=1}^d \sin^2(h \xi_j) + h^{-4}\sum\limits_{j=1}^d \sin^4(h \xi_j) \Big).
\end{equation*}
By the Parseval identity, this implies that
\begin{align*}
%\label{eq:lower_frozen_a}
\begin{split}
& \|\bar{S}_{\phi} f \|^2 + \|\bar{A}_{\phi} f \|^2 + c_0\tau\sum_{n\in(h\Z)^d}\sum_{j,k=1}^d\bar{\mathcal{C}}_{jk}^{f,f} \\
&\quad\geq C_{\text{low}} (\tau^4 \| f\|^2 +  h^{-4}\sum\limits_{j=1}^{d}\| (D^j_s)^2  f\| ^2 + \tau^2 h^{-2}\sum\limits_{j=1}^{d}\|D^j_s   f\| ^2) ,
\end{split}
\end{align*}
which yields the claim of the Proposition. 
\end{proof}

With all of these auxiliary results in hand, we now address the proof of Theorem \ref{thm:Carl}.

\begin{proof}[Proof of Theorem \ref{thm:Carl}]
The proof of Theorem \ref{thm:Carl} follows by combining all the previous estimates. We first rewrite the desired estimate in terms of the functions $f := e^{\phi}u$ for which we seek to prove
\begin{equation*}
\tau^{\frac{3}{2}}\|f\| + \tau^{\frac{1}{2}} \| h^{-1} D_s f\| + \tau^{-\frac{1}{2}} \| h^{-2} D^2_s f\| \leq C \|L_{\phi} f\|
\end{equation*}
(and for which we note that the action of $D_s$ on $e^{\phi}u$ yields terms $D_s e^{\phi}$ that can be absorbed in the first term with $\|e^{\phi}u\|$).
We now argue in two steps, first reducing the estimate to a bound for the localized functions and then proving the estimate for these.

\emph{Step 1: Localization.}
As a first step, we note that it suffices to prove the estimate
\begin{equation}
\label{eq:Carl_aux}
\tau^{\frac{3}{2}}\|f\| + \tau^{\frac{1}{2}} \| h^{-1} D_s f\| + \tau^{-\frac{1}{2}} \| h^{-2} D^2_s f\| \leq C \|L_{\phi} f\|
\end{equation}
for the localized functions $f_k$ from Lemma \ref{lem:localize}. Indeed, assuming that the estimate \eqref{eq:Carl_aux} is proven for $f_k$, an application of Minkowski's inequality and the error estimates from Lemma \ref{lem:localize} yield
\begin{align}
\label{eq:Carl_aux1}
\begin{split}
&\tau^{\frac{3}{2}}\|f\| + \tau^{\frac{1}{2}} \| h^{-1} D_s f\| + \tau^{-\frac{1}{2}} \| h^{-2} D^2_s f\|\\
&\quad \leq
\tau^{\frac{3}{2}}\sum_{k} \| f_k \| + \tau^{\frac{1}{2}} \sum_{k}\| h^{-1} D_s f_k\| + \tau^{-\frac{1}{2}} \sum_{k} \| h^{-2} D^2_s f_k\|\\
&\quad  \leq C \sum\limits_k \|L_{\phi} f_k\|
\leq C \|L_{\phi} f\| +  C_{\text{loc}} \tau^{\frac{1}{2}} \epsilon_0 \sum\limits_{j=1}^{d}\|h^{-1}D^j_s f\| + C_{\text{loc}} (\tau \epsilon_0 +\tau^{\frac{3}{2}} \epsilon_0+ \tau^2 \tau^{\frac{1}{2}} h \epsilon_0)  \|f\|.
\end{split}
\end{align}
Now choosing 
\begin{align}
\label{eq:epsilon0}
\epsilon_0 \leq \frac{1}{10C_{\text{loc}}},
\end{align}
and recalling that $\tau h \leq \delta_0 $ for some $\delta_0 \in (0,1)$, we may absorb the contribution on the right hand side of \eqref{eq:Carl_aux1} into its left hand side (in particular we note that $\tau^2 \tau^{\frac{1}{2}} h  \leq \delta_0 \tau^{\frac{3}{2}}$ by our assumptions on the relation between $\tau$ and $h$). This then yields the estimate \eqref{eq:Carl_aux}. The estimate \eqref{eq:Carl_main} follows from this by possibly choosing the constants in the terms which involve derivatives on the left hand side of \eqref{eq:Carl_aux} smaller, carrying out the product rule and absorbing the $L^2$ errors into the $L^2$ contribution on the left hand side of \eqref{eq:Carl_aux}.\\

\emph{Step 2. Proof of \eqref{eq:Carl_aux} for the localized functions.} It thus suffices to prove \eqref{eq:Carl_aux} for $f= f_k$.
To this end, we observe that for $f_k = f \psi_k$ with $\supp(f) \subset B_2 \setminus B_{1/2}$, $\psi_k$ as in Lemma \ref{lem:localize}, $c_0 \in (0,1)$ as in Proposition \ref{lem:lower_frozen} and $\tau_0>1$ such that $\tau_0 c_0 \geq 1$,
\begin{align*}
%\label{eq:expand_op}
\begin{split}
\tau \|L_{\phi} f_k \|^2
& = \tau \|S_{\phi} f_k \|^2 + \tau \|A_{\phi} f_k \|^2 +  \tau (f_k ,[S_{\phi}, A_{\phi} ] f_k )\\
& \geq \|S_{\phi} f_k \|^2 + \|A_{\phi} f_k \|^2 + \tau c_0 (f_k ,[S_{\phi}, A_{\phi} ] f_k )\\
& = \|\widetilde{S}_{\phi} f_k \|^2 + \|\widetilde{A}_{\phi} f_k \|^2 + \tau c_0 \sum_{n\in(h\Z)d}\sum\limits_{j_1, j_2=1}^{d}\mathcal{C}_{j_1j_2}^{f_k,f_k}- E_1,
\end{split}
\end{align*}
where by Lemma \ref{lem:discrete_approx}, taking into account that $0<\delta_0<1<\tau$, we get
\begin{align}
\label{eq:E1}
|E_1| \leq C (\delta_0^2\tau^4+\tau^{3}) \|f_k\|^2 + C \tau  \sum\limits_{j=1}^d \|h^{-1}D^j_s f_k \|^2.
\end{align} 
Choosing $\delta_0>0$ such that $C \delta_0 \leq \frac{C_{\text{low}}}{10}$, where $C_{\text{low}}$ is the constant from Proposition \ref{lem:lower_frozen}, we will be able to treat the contributions in \eqref{eq:E1} as error contributions in the following arguments.

Exploiting the bounds from Lemma \ref{lem:freeze}, we may further estimate
\begin{multline*}
\|\widetilde{S}_{\phi} f_k \|^2 + \|\widetilde{A}_{\phi} f_k \|^2  +\tau c_0 \sum_{n\in(h\Z)d}\sum\limits_{j_1, j_2=1}^{d}\mathcal{C}_{j_1j_2}^{f_k,f_k} - E_1 \\
 \geq
 \|\bar{S}_{\phi} f_k \|^2 +\|\bar{A}_{\phi} f_k \|^2 +  \tau c_0 \sum_{n\in(h\Z)d}\sum\limits_{j_1, j_2=1}^{d}\bar{\mathcal{C}}_{j_1j_2}^{f_k,f_k}- E_1 - E_2 ,
\end{multline*}
where by the estimates from Lemma \ref{lem:freeze}
\begin{equation}
\label{eq:E3}
|E_2| \leq C(\tau^{3}\epsilon_0^{-2} + \tau^{\frac{7}{2}}\epsilon_0^{-1}) \|f_k \|^2 + C(\tau^{\frac{3}{2}} \epsilon_0^{-1} + \tau \epsilon_0^{-2})\sum\limits_{j=1}^d \|h^{-1}D^j_sf_k\|^2.
\end{equation}

Finally, invoking Proposition \ref{lem:lower_frozen}, we infer that
\begin{align}
\label{eq:prop_frozen}
\begin{split}
&
 \|\bar{S}_{\phi} f_k \|^2 +  \|\bar{A}_{\phi} f_k \|^2 + \tau c_0 \sum_{n\in(h\Z)d}\sum\limits_{j_1, j_2=1}^{d}\bar{\mathcal{C}}_{j_1j_2}^{f_k,f_k} - E_1 - E_2\\
& \quad \geq C_{\text{low}}\Big(\tau^4 \| f_k \|^2 +  h^{-4}\sum_{j=1}^d\|(D^j_s)^2 f_k \| ^2 + \tau^2 h^{-2}\sum_{j=1}^d\|D^j_s f_k\| ^2\Big) - E_1 - E_2.
\end{split}
\end{align}
Recalling the condition for $\epsilon_0>0$ from \eqref{eq:epsilon0}, we now choose $\epsilon_0 = \frac{1}{20 C_{\text{loc}}}$ and fix $\tau_0>1$ so large and $\delta_0>0$ so small that
\begin{equation*}
C\max\{\tau_0^3,\delta_0^2 \tau_0^4, \tau_0^{\frac{7}{2}}\epsilon_0^{-1}, \tau_0^{3} \epsilon_0^{-2}\} \leq C_{\text{low}}\frac{\tau_0^4}{10}\quad  \mbox{ and }\quad C\max\{\tau_0^{\frac{3}{2}} \epsilon_0^{-1}, \tau_0 \epsilon_0^{-2} \}\leq C_{\text{low}}\frac{\tau_0^2}{10}.
\end{equation*}
Further, we choose the value of $h_0> 0$ so small that $\delta_0 h_0^{-1} \geq 100 \tau_0> 100$, which in particular implies that for all $h\in (0,h_0)$ the interval $ (\tau_0,\delta_0 h^{-1})$ is non-empty. With these choices, it follows that for $\tau \in (\tau_0,\delta_0 h^{-1})$, we may absorb the error contributions $E_1$ and $E_2$ from \eqref{eq:E1} and \eqref{eq:E3} into the positive right hand side contributions in \eqref{eq:prop_frozen}.
Therefore, we obtain that
\begin{equation*}
\tau \|L_{\phi} f_k\|^2 \geq \frac{C_{\text{low}}}{2} \Big( \tau^4 \| f_k \|^2 +  h^{-4}\sum_{j=1}^d\|(D^j_s)^2 f_k \| ^2 + \tau^2 h^{-2}\sum_{j=1}^d\|D^j_s f_k \| ^2 \Big).
\end{equation*}
Dividing by $\tau>\tau_0$ implies the desired result.
\end{proof}

\section{Proofs of Theorems \ref{thm:log_conv} and \ref{thm:3spheres} }
\label{sec:proofs_main}

In this section we provide the proofs of the results of Theorems \ref{thm:log_conv} and \ref{thm:3spheres}.

\subsection{Derivation of Theorem \ref{thm:3spheres} from Theorem \ref{thm:log_conv}}

We first show how Theorem \ref{thm:log_conv} implies Theorem \ref{thm:3spheres}.

\begin{proof}[Proof of Theorem \ref{thm:3spheres}]
 Let us assume that Theorem  \ref{thm:log_conv} holds.
First, let us take the value $\tau^*$ such that $(c_1 + c_2) \tau^* =\log \frac{\|u\|_{L^2(B_2)}}{\|u\|_{L^2(B_{1/2})}} $. It is easy to check that with this value of $\tau^*$ it holds
$$
e^{c_1\tau^*} \|u\|_{L^2(B_{1/2})} = e^{-c_2 \tau^*}\|u\|_{L^2(B_2)}.
$$
Given $u$ satisfying \eqref{eq:log_conv}, we can assume that $\tau_0<\tau^*$, and we are in one of the following two cases:
\begin{itemize}
\item If $\tau^* \in (\tau_0, \delta_0 h^{-1})$, then plugging this into the right hand side of \eqref{eq:log_conv} yields, for $\tau=\tau^*$, that
\begin{equation*}
C(e^{c_1\tau} \|u\|_{L^2(B_{1/2})} + e^{-c_2\tau}\|u\|_{L^2(B_2)})
= 2 C\|u\|^{\frac{c_2}{c_1 + c_2}}_{L^2(B_{1/2})}\|u\|_{L^2(B_2)}^{\frac{c_1}{c_1 + c_2}}.
\end{equation*}
\item If $\tau^* \notin (\tau_0, \delta_0 h^{-1})$, we observe that $\tau< \delta_0 h^{-1}<\tau^*$.
 We hence obtain that
 \begin{equation}
 \label{estim}
  e^{c_1 \frac{\delta_0}{2} h^{-1}}\|u\|_{L^2(B_{1/2})}\le  e^{c_1 \tau^*}\|u\|_{L^2(B_{1/2})}= e^{-c_2 \tau^*}\|u\|_{L^2(B_{2})}  \leq e^{- c_2 \frac{\delta_0}{2}  h^{-1}}\|u\|_{L^2(B_2)}.
 \end{equation}
Thus, since \eqref{eq:log_conv} holds for all $\tau \in (\tau_0, \delta_0 h^{-1})$ and by using \eqref{estim}, we have
 \begin{equation*}
 \|u\|_{L^2(B_1)} \le C(e^{c_1 \frac{\delta_0}{2} h^{-1}} \|u\|_{L^2(B_{1/2})} + e^{-c_2 \frac{\delta_0}{2} h^{-1} } \|u\|_{L^2(B_2)})\le 2\cdot e^{-c_2 \frac{\delta_0}{2} h^{-1} }\|u\|_{L^2(B_2)}.
\end{equation*}

  \end{itemize}
Combining both cases implies \eqref{thm:3spheres} with $\alpha = \frac{c_2}{ c_1 + c_2}$ and $c_0 = \frac{\delta_0}{2}c_2$.
\end{proof}

\subsection{Derivation of Theorem \ref{thm:log_conv} from the Carleman estimate of Theorem \ref{thm:Carl}}

In this section, we deduce Theorem \ref{eq:log_conv} from Theorem \ref{thm:Carl}. As an auxiliary result we deduce a Caccioppoli inequality for more general second order difference equations. In particular this applies to the difference Schrödinger equation \eqref{eq:Schroedinger}. 

\begin{lem}[Caccioppoli]
\label{lem:Caccioppoli}
Let $a_{jk}: (h\Z)^d \rightarrow \R^{d \times d}$ be symmetric, bounded and uniformly elliptic with ellipticity constant $\lambda \in (0,1)$, i.e. assume that for all $\xi \in \R^{d}\setminus \{0\}$ we have
\begin{equation*}
\lambda |\xi|^2 \leq \sum_{i,j=1}^d \xi_i a_{ij} \xi_j \leq \lambda^{-1} |\xi|^2.
\end{equation*}
 Let $V: (h\Z)^d \rightarrow \R$ be uniformly bounded in $h$ and $B:(h\Z)^d \rightarrow \R^d$ be a uniformly bounded tensor field. Denote $B:=(B_j)_{j=1}^d$. Let $u: (h \Z)^d \rightarrow \R$ be a weak solution of 
$$
 \Big( h^{-2}\sum\limits_{j,k=1}^{d} a_{jk}(n)D_{+,j}^
h D_{-,k}^h  + h^{-1}\sum\limits_{j=1}^d B_j(n)D_{+,j}^h  + V(n)\Big) u(n)  =0,
$$
 in the sense that $u \in H^1_{\operatorname{loc},h}((h\Z)^d)$ and for all $v\in H^1((h\Z)^d)$ with $\supp(v)$ bounded, we have
\begin{multline*}
 \sum\limits_{n \in (h \Z)^d}\Big[ h^{-2}\sum\limits_{j,k=1}^{d} a_{jk}(n)(u(n + h e_j)- u(n))(v(n + h e_k) - v(n)) \\
 - h^{-1}\sum\limits_{j=1}^d B_j(n)(u(n+h e_j)- u(n)) v(n)  - V(n) u(n) v(n) \Big] =0.
\end{multline*}
Let $0<10h<r_1< r_1 + 100 h < r_2$.
Then there exists a constant $C>1$ depending on $r_1, r_2, \|V\|_{L^{\infty}}, \|B\|_{L^{\infty}}$ such that
\begin{equation*}
\sum\limits_{j=1}^d\| h^{-1}( u(\cdot +h e_j)- u(\cdot))\|_{L^2(B_{r_1})}^2 \leq C \| u\|_{L^2(B_{r_2})}^2 .
\end{equation*}
\end{lem}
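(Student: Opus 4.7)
My plan is to carry out the standard Caccioppoli energy argument adapted to the lattice. First I would fix a smooth radial cutoff $\eta: (h\Z)^d \to [0,1]$, obtained by restricting to the lattice a Lipschitz bump on $\R^d$, with $\eta \equiv 1$ on $B_{r_1}$, $\supp \eta \subset B_{(r_1+r_2)/2}$, $0 \leq \eta \leq 1$, and $|\eta(n+he_j) - \eta(n)| \leq C h/(r_2 - r_1)$ for every $n$ and $j$. The hypothesis $10h < r_1 < r_1 + 100h < r_2$ guarantees that the $O(h)$-enlargement of $\supp \eta$ lies inside $B_{r_2}$ and that $h\|\nabla\eta\|_\infty \lesssim h/(r_2-r_1)$ is uniformly small. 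I would then test the weak formulation with $v := \eta^2 u$, which has bounded support and discrete finite energy, hence is admissible.

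Next, I would expand the second-order term via the one-sided discrete Leibniz rule $D_{+,k}^h(\eta^2 u)(n) = \eta^2(n) D_{+,k}^h u(n) + u(n+he_k) D_{+,k}^h(\eta^2)(n)$, combined with the telescoping identity $D_{+,k}^h(\eta^2)(n) = (\eta(n+he_k) + \eta(n)) D_{+,k}^h \eta(n)$. Substituting into the weak form isolates the principal quantity $\sum_n h^{-2} \eta^2(n) \sum_{j,k} a_{jk}(n) D_{+,j}^h u(n) D_{+,k}^h u(n)$, which by symmetry and ellipticity of $a_{jk}$ is bounded below by $\lambda \sum_n h^{-2} \eta^2(n) \sum_{j} |D_{+,j}^h u(n)|^2$. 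The remaining pieces — the Leibniz cross-term, the drift $h^{-1} B_j D_{+,j}^h u \cdot \eta^2 u$, and the potential $V \eta^2 u^2$ — I would estimate using Cauchy--Schwarz and Young's inequality with a small absorption parameter $\epsilon>0$. For example, the cross-term is pointwise dominated by $\|\nabla\eta\|_\infty (\eta(n+he_k) + \eta(n)) |u(n+he_k)| \cdot h^{-1} |D_{+,j}^h u(n)|$, and Young's inequality produces an $\epsilon\, \eta^2(n) h^{-2}|D_{+,j}^h u(n)|^2$ contribution — absorbed into the principal quantity — together with a $C\epsilon^{-1}(r_2-r_1)^{-2}|u(n+he_k)|^2$ $L^2$-contribution. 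The drift and potential terms are handled identically, yielding dependence on $\|B\|_\infty$ and $\|V\|_\infty$ in the final constant. Choosing $\epsilon$ small compared to $\lambda$ and summing, the gradient-squared contributions on the right would absorb into the principal term, leaving $\sum_n h^{-2} \eta^2(n) \sum_j |D_{+,j}^h u(n)|^2 \leq C \|u\|^2_{L^2(B_{r_2})}$, and the claim follows since $\eta \equiv 1$ on $B_{r_1}$.

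The main technical point — not a genuine conceptual obstacle but the place where the discrete flavour intrudes — is the bookkeeping of the shifted evaluations $u(n+he_k), \eta(n+he_k)$ that the discrete Leibniz rule introduces. Shifted $L^2$-norms of $u$ are harmless because, by the hypothesis $10h < r_1$, every shifted point remains inside $B_{r_2}$. Shifted cutoffs of the form $\eta^2(n+he_k)$ inside an absorbable term I would reindex via $m = n + he_k$ to move the weight back onto $m$ at the cost of a shifted discrete derivative, and then control the discrepancy using $h\|\nabla\eta\|_\infty \lesssim h/(r_2-r_1) \ll 1$ (or, if preferred, via a short hole-filling iteration across a finite number of annuli of width $\sim 10h$), so that the induced shift errors are genuinely lower order. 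No Carleman weights or pseudoconvexity are needed; the argument is purely energetic and mirrors the continuum Caccioppoli inequality with careful discrete bookkeeping.
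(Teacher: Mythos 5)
Your proposal is correct and follows essentially the same route as the paper's proof: test the weak form with $v = \eta^2 u$, expand via the discrete Leibniz rule $D^h_{+,k}(\eta^2 u) = \eta^2 D^h_{+,k} u + u(\cdot+he_k)D^h_{+,k}(\eta^2)$, use ellipticity of $a_{jk}$ for the principal term, and absorb the cross, drift and potential terms by Young's inequality, with the shifted evaluations kept inside $B_{r_2}$ thanks to the hypothesis $10h<r_1$. Your insertion of an explicit small parameter $\epsilon<\lambda$ in Young's inequality is in fact a bit more careful than the paper's displayed computation, which effectively takes $\epsilon=1$ and produces an upper bound $\tfrac{\lambda^{-1}}{2}\|\eta\,\nabla_h u\|^2+\ldots$ that does not literally absorb into the left-hand side $\lambda\|\eta\,\nabla_h u\|^2$ for small $\lambda$ without exactly the $\epsilon$-modification you describe.
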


Here $H^{1}_{\operatorname{loc},h}((h \Z)^d)$ and $H^{1}((h \Z)^d)$ denote the local and global $H^1$ spaces on the lattice.

\begin{proof}[Proof of Lemma \ref{lem:Caccioppoli}]
The result follows along the same lines as the continuous Caccioppoli inequality; we only present the proof for completeness. As for general $r_1, r_2$ the proof is analogous, we only discuss the details in the case $r_1 = 1$, $r_2 =2$ and $0 < h \leq h_0$ for $h_0 \ll 1$ sufficiently small.

Let $\eta: (h \Z)^d \rightarrow \R$ be a cut-off function which is equal to one on $B_1$ and vanishes outside of $B_2$.
The function $(u \eta^2)(n)$ is then an admissible test function in the Schrödinger equation for $u$. Inserting this, we obtain
\begin{multline*}
0 = \sum\limits_{n \in (h \Z)^d}\Big[ h^{-2}\sum\limits_{j,k=1}^{d} a_{jk}(n)(u(n + h e_j)- u(n))((u\eta^2)(n + h e_k) - (u \eta^2)(n))\\
 - h^{-1}\sum\limits_{j=1}^d B_j(n)\big(u(n+h e_j)- u(n)\big) (u \eta^2)(n)  - V(n) u(n) (u \eta^2)(n) \Big].
\end{multline*}
We first deal with the leading, second order contribution. Noting that
\begin{equation*}
(u\eta^2)(n + h e_k) - (u \eta^2)(n)
= \big(u(n + h e_k)- u(n)\big) \eta^2(n) + u(n + h e_k )\big(\eta^2 (n + h e_k)-\eta^2(n)\big),
\end{equation*}
we obtain that
\begin{align}
\label{eq:product_rule}
\begin{split}
&\sum\limits_{n \in (h \Z)^d} h^{-2}\sum\limits_{j,k=1}^{d} a_{jk}(n)(u(n + h e_j)- u(n))((u\eta^2)(n + h e_k) - (u \eta^2)(n))\\
&\quad= \sum\limits_{n \in (h \Z)^d} h^{-2}\sum\limits_{j,k=1}^{d} a_{jk}(n)(u(n + h e_j)- u(n))(u(n + h e_k) - u(n))\eta^2(n)\\
& \qquad + \sum\limits_{n \in (h \Z)^d} h^{-2}\sum\limits_{j,k=1}^{d} a_{jk}(n)(u(n + h e_j)- u(n))u(n + h e_k )(\eta^2 (n + h e_k)-\eta^2(n)).
\end{split}
\end{align}
By virtue of the ellipticity of $a_{jk}$ we further infer that
\begin{align*}
%\label{eq:lower}
\begin{split}
&\sum\limits_{n \in (h \Z)^d}h^{-2}\sum\limits_{j,k=1}^{d} a_{jk}(n)(u(n + h e_j)- u(n))(u(n + h e_k) - u(n))\eta^2(n)\\
&\quad \geq \lambda \sum\limits_{n \in (h \Z)^d}h^{-2}\sum\limits_{j=1}^{d} (u(n + h e_j)- u(n))^2\eta^2(n) = \lambda \sum\limits_{j=1}^{d} \|h^{-1}(u(\cdot + h e_j)- u(\cdot)) \eta \|^2_{L^2((h \Z)^d)}.
\end{split}
\end{align*}
For the second contribution on the right hand side of \eqref{eq:product_rule}, we rewrite $\eta^2(n+he_k)-\eta^2(n) = (\eta(n + he_k)-\eta(n))(\eta(n + he_k)+ \eta(n))$ and estimate from above:
\begin{align}
\label{eq:error_term_Cac}
\begin{split}
&h^{-2} a_{jk}(n)(u(n + h e_j)- u(n))u(n + h e_k )(\eta^2 (n + h e_k)-\eta^2(n))\\
&= h^{-2} a_{jk}(n)(u(n + h e_j)- u(n))u(n + h e_k )(\eta(n + he_k)-\eta(n))(\eta(n + he_k)+ \eta(n))\\
& = h^{-2} a_{jk}(n)(u(n + h e_j)- u(n))\eta(n) u(n + h e_k )(\eta(n + he_k)-\eta(n))\\
& \quad + h^{-2} a_{jk}(n)(u(n + h e_j)- u(n))\eta(n + he_k) u(n + h e_k )(\eta(n + he_k)-\eta(n))\\
& = 2h^{-2} a_{jk}(n)(u(n + h e_j)- u(n))\eta(n) u(n + h e_k )(\eta(n + he_k)-\eta(n))\\
& \quad + h^{-2} a_{jk}(n)(u(n + h e_j)- u(n))(\eta(n+ h e_k)- \eta(n )) u(n + h e_k )(\eta(n + he_k)-\eta(n))\\
&\le h^{-2} a_{jk}(n)(u(n + h e_j)- u(n))^2 \eta^2(n) + h^{-2} a_{jk}(n) u^2(n + h e_k )(\eta(n + he_k)-\eta(n))^2\\
&\quad + C h^{-2} a_{jk}(n)(u^2(n + h e_j)  +u^2(n)+ u^2(n + h e_k ))  (\eta(n + he_k)-\eta(n))^2 .
\end{split}
\end{align}
Noting that  $a_{ij} \le \frac{1}{2}\lambda^{-1}$ (this follows from the ellipticity condition when choosing appropriate $\xi$) we obtain that
\begin{align*}
%\label{eq:error_term_Cac2}
\begin{split}
&\sum\limits_{n \in (h \Z)^d} h^{-2}  \sum_{j,k=1}^n a_{jk}(n)(u(n + h e_j)- u(n))u(n + h e_k )(\eta^2 (n + h e_k)-\eta^2(n))\\
&  \leq\sum\limits_{n \in (h \Z)^d} \dfrac{1}{2}\lambda^{-1} \sum_{j=1}^d ( \eta^{2}(n)h^{-2}(u(n + h e_j)-u(n))^2) + C_{\lambda} \|u\|_{L^2(B_2)}^2 \sup\limits_{k}|h^{-1}(\eta(n+ h e_k)- \eta(n))|^2.
\end{split}
\end{align*}

Combining this with the bounds for $B_j$ and $V$, we obtain
\begin{align}
\label{eq:Cacc_aux_comb}
\begin{split}
&\lambda \sum\limits_{j=1}^{d} \|h^{-1}(u(\cdot + h e_j)- u(\cdot)) \eta \|^2_{L^2((h \Z)^d)}\leq \frac{\lambda^{-1}}{2}\sum\limits_{j=1}^{d} \|h^{-1}(u(\cdot + h e_j)- u(\cdot)) \eta \|^2_{L^2((h \Z)^d)} \\
& \quad +  C_{\lambda}\sup\limits_{k}\|h^{-1}(\eta(\cdot+ h e_k)- \eta(\cdot))\|_{L^{\infty}((h\Z)^d)}^2 \|u\|_{L^2(B_2)}^2 \\
& \quad + \|V\|_{L^{\infty}(B_2)} \|u\|_{L^2(B_2)}^2
 + \|B\|_{L^{\infty}(B_2)} \|h^{-1}(u(\cdot + h e_j) - u(\cdot )) \eta\|_{L^2((h \Z)^d)}\|u\|_{L^2(B_2)}.
 \end{split}
\end{align}
Here the first contribution in \eqref{eq:Cacc_aux_comb} originates from the first right hand side contribution in \eqref{eq:error_term_Cac}. We may absorb it from the right hand side of \eqref{eq:Cacc_aux_comb} into the left hand side of \eqref{eq:Cacc_aux_comb}. Using Young's inequality for the contribution
\begin{align*}
\begin{split}
& \|B\|_{L^{\infty}(B_2)} \|h^{-1}(u(\cdot + h e_j) - u(\cdot )) \eta\|_{L^2((h \Z)^d)} \|u\|_{L^2(B_2)}
\\
&\leq \frac{\lambda}{4}\|h^{-1}(u(\cdot + h e_j) - u(\cdot )) \eta\|^2_{L^2((h \Z)^d)}+ C_{\lambda}\|B\|^2_{L^{\infty}(B_2)} \|u\|_{L^2(B_2)}^2,
\end{split}
\end{align*}
allows us to also absorb the gradient term in this contribution into the left hand side of \eqref{eq:Cacc_aux_comb}.
Due to the bounds on $\eta$, this concludes the proof of the Caccioppoli estimate.
\end{proof}

\begin{proof}[Proof of Theorem \ref{thm:log_conv}]
The proof of Theorem \ref{thm:log_conv} from the Carleman estimate in Theorem \ref{thm:Carl} follows from a standard cut-off argument. For completeness, we present the details.

Let $u: (h\mathbb{Z})^d \rightarrow \mathbb{R}$ such that $P_h u(n) = 0$  for all  $n \in B_4$.
Fix $\varepsilon>0$ to be small enough and assume that $h_0>0$ is sufficiently small. We consider the function $w(n)=\theta(n)u(n)$, with $0\le \theta(x)\le 1$ a $C^{\infty}(\R^d)$ cut-off function defined as
$$
\theta(x)=
\begin{cases}
0, \quad &x\in B_{\frac14+\varepsilon}\cup B_{2-\varepsilon}^c\\
1, \quad &x\in B_{\frac32}\setminus B_{\frac12}.
\end{cases}
$$

Using the equation for $u$, we then write
\begin{align*}
&h^{-2}\D_d w(n)=\sum_{j=1}^d\big(\theta(n+he_j)u (n+he_j)+\theta(n-he_j)u(n-he_j)-2\theta(n)u(n)\big)h^{-2}\\
&=\theta(n)h^{-2}\D_du(n)+\sum_{j=1}^d\big((\theta(n+he_j)-\theta(n))u(n+he_j)+(\theta(n-he_j)-\theta(n))u(n-he_j)\big)h^{-2}\\
&=\theta(n)h^{-2}\D_du(n) + \sum_{j=1}^d\Big((\theta(n+he_j)-\theta(n))(u(n+he_j)-u(n-he_j))h^{-2}\\
&\qquad +(\theta(n-he_j)-\theta(n))u(n-he_j)+(\theta(n+he_j)-\theta(n))u(n-he_j)\Big)h^{-2}\\
&= \theta(n) V(n) u(n) + \theta(n) h^{-1}\sum\limits_{j=1}^{d} B_j(n) D^h_{+,j} u(n)\\
& \quad + \sum_{j=1}^d\big((\theta(n+he_j)-\theta(n))(u(n+he_j)-u(n-he_j))\big)h^{-2}+h^{-2}\D_d\theta(n)\sum_{j=1}^d u(n-he_j)\\
&=  V(n) w(n) + h^{-1}\sum\limits_{j=1}^{d} B_j(n) D^h_{+,j} w(n)\\
& \quad - h^{-1} \sum\limits_{j=1}^{d} B_j(n)(\theta(n+ h e_j) - \theta(n))u(n+he_j)) \\
& \quad + \sum_{j=1}^d\big((\theta(n+he_j)-\theta(n))(u(n+he_j)-u(n-he_j))\big)h^{-2}+h^{-2}\D_d\theta(n)\sum_{j=1}^d u(n-he_j)\\
&=:  V(n) w(n) + h^{-1}\sum\limits_{j=1}^{d} B_j(n) D^h_{+,j} w(n)
+ T_{d,1} u(n) + T_{d,2}u(n) + T_{d,3} u(n).
\end{align*}
Applying the Carleman estimate \eqref{eq:Carl_main} from Theorem \ref{thm:Carl}, using Remark \ref{rmk:deriv} and the triangle inequality, we obtain
\begin{equation}
\label{eq:apply_Carl}
\tau^{\frac{3}{2}}\|e^{\phi} w\| + \tau^{\frac{1}{2}} h^{-1} \|e^{\phi} D_+^hw\|  \leq C_{\text{Carl}}\Big( \| e^{\phi} V w\| +   h^{-1}   \|e^{\phi}B D^h_+ w\| + \sum\limits_{\ell=1}^3\| e^{\phi} T_{d,\ell} u\| \Big).
\end{equation}
Now choosing $\tau \geq 2C_{\text{Carl}}\max\{1,\|V\|_{L^{\infty}}^{\frac{2}{3}}, \|B\|_{L^{\infty}}^{2}\}$ allows us to absorb the first two contributions from the right hand side of \eqref{eq:apply_Carl} into the left hand side of \eqref{eq:apply_Carl}. We thus obtain the bound
\begin{equation}
\label{eq:apply_Carl1}
\tau^{\frac{3}{2}}\|e^{\phi} w\| + \tau^{\frac{1}{2}}  h^{-1}   \|e^{\phi} D_+^h w\| \leq 2C_{\text{Carl}} \sum\limits_{\ell=1}^3\| e^{\phi} T_{d,\ell} u\| .
\end{equation}
We next deal with the errors on the right hand side of \eqref{eq:apply_Carl1}.
On the one hand, we have for $j\in \{1,3\}$ 
$$
 \|e^{\phi}  T_{d,j} u\|^2
\leq C\big(\|e^{\phi}u\|^2_{B_{\frac12}\setminus B_{\frac14+\varepsilon}}+\|e^{\phi}u\|^2_{B_{2-\varepsilon}\setminus B_{\frac32}}\big)\le C\big(e^{2\tau \varphi(1/4+\varepsilon)}\|u\|^2_{B_{\frac12}}+e^{2\tau \varphi(3/2)}\|u\|^2_{B_{2}}\big).
$$
On the other hand, for $T_{d,2}$ 
\begin{equation*}
 \|e^{\phi}  T_{d,j} u\|^2
\leq C \big(\|e^{\phi} D_su\|_{B_{2-\varepsilon}\setminus B_{\frac32}}^2+\|e^{\phi}D_s u\|_{B_{\frac12}\setminus B_{{\frac14}+\varepsilon}}^2\big)\le C \big(e^{2\tau \varphi(3/2)} \|u\|_{B_2}^2+e^{2\tau\varphi(1/4+\varepsilon)} \|u\|_{B_{\frac12}}^2\big),
\end{equation*}
where we used the Caccioppoli estimate from Lemma \ref{lem:Caccioppoli}.

 Moreover, since $w\equiv u$ in $B_{3/2}\setminus B_{1/2}$, we have
$$
\tau^3 \|e^{\phi} w   \|^2\ge \tau^3\|e^{\phi}u\|_{B_1\setminus B_{\frac12}}^2\ge  \tau^3e^{2\tau\varphi(1)}\| u\|_{B_1\setminus B_{\frac12}}^2.
$$
In view of the above, we get
$$
\tau^3e^{2\tau\varphi(1)}\| u\|_{B_1\setminus B_{\frac12}}^2\le  C \big(e^{2\tau\varphi(3/2)} \|u\|_{B_2}^2+e^{2\tau\varphi(1/4+\varepsilon)} \|u\|_{B_{\frac12}}^2\big),
$$
and since $\varphi$ is decreasing,
\begin{align*}
\| u\|_{B_1\setminus B_{\frac12}}^2
& \le C \big(\tau^{-3}e^{2\tau\varphi(3/2)-2\tau \varphi(1)} \|u\|_{B_2}^2+\tau^{-3}e^{2\tau\varphi(1/4)-2\tau\varphi(1)} \|u\|_{B_{\frac12}}^2\big)\\
& \le C\big(e^{-2c_2 \tau} \|u\|_{B_{2}}^2+e^{2c_1\tau} \|u\|_{B_{\frac12}}^2\big)
\end{align*}
for some constants $c_1, c_2>0$ with $c_1:=|\varphi(3/2)- \varphi(1)|$ and $c_2:=\varphi(1/4)-\varphi(1)>0$ (for which we choose the constant $c_{ps}>0$ in Theorem \ref{thm:Carl} and Lemma \ref{lem:weight} sufficiently small). Since further trivially $\| u\|_{B_{\frac12}}^2\le e^{2c_1\tau} \|u\|_{B_{\frac12}}^2$, this concludes the proof.
\end{proof}

\begin{rmk}
\label{rmk:singular_potentials}
We remark that as a feature of the discrete setting, to a certain degree we can also deal with more singular potentials. Tracking the argument from above (in particular the passage from
\eqref{eq:apply_Carl} to \eqref{eq:apply_Carl1}), we note that if $V$ and $B$ only satisfy the bounds
\begin{equation*}
\|V\|_{L^{\infty}(B_{4})} \leq \mu_0 h^{-\frac{3}{2}},\qquad \| B\|_{L^{\infty}(B_4)} \leq \mu_0 h^{-\frac{1}{2}},
\end{equation*}
with $\mu_0 \leq \frac{C_{\textup{Carl}}}{10} \delta_0$, we can deduce that for some constants $\tilde{c}_1, \tilde{c}_2>0$ (independent of $h$)
\begin{equation*}
\|u\|_{L(B_1)} \leq C (e^{ \tilde{c}_1 h^{-1}} \|u\|_{L^2(B_{\frac{1}{2}})} + e^{- \tilde{c}_2 h^{-1}} \|u\|_{L^2(B_2)}).
\end{equation*}
We also remark that while yielding quantitative propagation of smallness type estimates, as expected these estimates do not pass to the limit $h \rightarrow 0$. Further, the $h$ dependence in the exponentials can be adapted to the size of the potentials (with different bounds in the exponents of the logarithmic convexity estimates depending on the bounds on $V$, $B$).
\end{rmk}

\section{Remarks on Scaling}
\label{sec:scaling}

Having established \eqref{eq:3_balls}, we note that to a certain degree -- although this is substantially weaker than in the continuous setting -- it is possible to rescale this estimate. We discuss this in the case of the Laplacian (for more general operators similar observations remain valid). To this end, we make the following observation. We shall use the notation $\D_{d,h} =h^{-2}\Delta_d$. 

\begin{lem}
\label{lem:rescaling_discrete}
Let $u:B_4 \rightarrow \R$ be such that $\D_{d,h} u = 0$ in $B_R \subset (h\Z)^d$. Then, for any $m \in \N$ such that $h m \leq 2$, we also have $\D_{d,mh} u =0$ in $B_{R/m} \subset (mh \Z)^d$ (i.e. with respect to the lattice $(mh \Z)^d$).
\end{lem}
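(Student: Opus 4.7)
The plan is to deduce the coarse-scale discrete harmonicity from the fine-scale one via a direct algebraic expansion of coarse second differences. For each direction $j\in\{1,\dots,d\}$ and each $y\in(mh\Z)^d\cap B_{R/m}$, I would first establish the one-dimensional telescoping identity
\[
u(y+mhe_j)+u(y-mhe_j)-2u(y)=\sum_{k=1-m}^{m-1}(m-|k|)\,\Delta^2_{j,h}u(y+khe_j),
\]
where $\Delta^2_{j,h}u(p):=u(p+he_j)+u(p-he_j)-2u(p)$ denotes the unscaled one-dimensional second difference at scale $h$. This is a purely 1D fact, verifiable either by induction on $m$ or by iterated summation by parts.

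Next, I would check the geometric range condition: since $mh\leq 2$ and $y\in B_{R/m}$, every intermediate point $p=y+khe_j$ with $|k|\leq m-1$ still lies in $B_R$, so that the fine-scale harmonicity $\sum_{j=1}^{d}\Delta^2_{j,h}u(p)=h^2\D_{d,h}u(p)=0$ is available at each such $p$. Summing the telescoping identity above over $j$ and dividing by $(mh)^2$ then yields
\[
\D_{d,mh}u(y)=(mh)^{-2}\sum_{j=1}^{d}\sum_{k=1-m}^{m-1}(m-|k|)\,\Delta^2_{j,h}u(y+khe_j),
\]
expressing $\D_{d,mh}u(y)$ as a weighted sum of fine-scale second differences along the axial segments through $y$.

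The final step is to reorganize this double sum so that, at each intermediate point $p$ appearing in it, the contributions $\Delta^2_{1,h}u(p),\dots,\Delta^2_{d,h}u(p)$ from all $d$ directions combine to the full expression $\sum_{j}\Delta^2_{j,h}u(p)=0$. This is exactly where I expect the main obstacle to lie: in dimensions $d\geq 2$ the axial segments $\{y+khe_j:|k|\leq m-1\}$ for different $j$ meet only at the center $y$, so the individual summands $\Delta^2_{j,h}u(y+khe_j)$ do not pair up across directions. To push through, I would induct on $m$, the base case $m=1$ being immediate, and in the inductive step attempt to rewrite $\D_{d,mh}u(y)$ as a linear combination of $\D_{d,(m-1)h}u(y)$, which vanishes by the inductive hypothesis, together with corrections of the form $\D_{d,h}u$ evaluated at finitely many fine-lattice points in a $(m-1)h$-neighbourhood of $y$, which vanish by the hypothesis of the lemma. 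Exhibiting such a clean decomposition — or else identifying the additional structural feature of the hypothesis which forces the multi-directional rearrangement to collapse — is the technical heart of the argument.
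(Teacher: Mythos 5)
Your instinct about the obstruction is exactly right, and unfortunately it cannot be circumvented: the lemma as stated is false for $d\geq 2$. Your 1D telescoping identity is correct, and in $d=1$ (together with the range check) it already constitutes a complete proof, since each summand there equals $h^2\,\D_{d,h}u$ at a point of $B_R$. But in higher dimensions, as you note, the axial segments through $y$ in different directions share only the center, and single-direction second differences at off-center points never recombine into Laplacians. Writing $\Delta^2_{j,h}u(p):=u(p+he_j)+u(p-he_j)-2u(p)$ and pushing your $m=2$ computation one step further via the identity $\Delta^2_{j,h}u(y+he_j)+\Delta^2_{j,h}u(y-he_j)=(\Delta^2_{j,h})^2u(y)+2\,\Delta^2_{j,h}u(y)$ gives
\begin{equation*}
(2h)^2\,\D_{d,2h}u(y)=\sum_{j=1}^{d}(\Delta^2_{j,h})^2u(y)+4\,h^2\,\D_{d,h}u(y).
\end{equation*}
The second term vanishes by hypothesis, but the first is a biharmonic-type quantity that discrete harmonicity does not control: $\sum_j\Delta^2_{j,h}u=0$ does not imply $\sum_j(\Delta^2_{j,h})^2u=0$, the two differing by the mixed term $\sum_{j\neq k}\Delta^2_{j,h}\Delta^2_{k,h}u$.

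A concrete counterexample in $d=2$, $h=1$, $m=2$: the polynomial
\begin{equation*}
u(x_1,x_2)=x_1^4-6x_1^2x_2^2+x_2^4-x_1^2-x_2^2
\end{equation*}
has $\Delta^2_1 u=12(x_1^2-x_2^2)$ and $\Delta^2_2 u=12(x_2^2-x_1^2)$, so $\D_{d,1}u\equiv 0$ on all of $\Z^2$; yet $u(\pm 2,0)=u(0,\pm 2)=12$ and $u(0,0)=0$, hence $\D_{d,2}u(0)=\tfrac14\bigl(4\cdot 12\bigr)=12\neq 0$. The paper's own proof contains precisely the gap you located: its displayed $m=2$ decomposition is the $j$-wise form of your telescoping identity, and the sentence ``Summing and noting that the corresponding contributions in the brackets yield the Laplacian on $(h\Z)^d$'' is only valid for the middle bracket $2\Delta^2_{j,h}u(x)$; the outer brackets $\Delta^2_{j,h}u(x\pm he_j)$ sit at $j$-dependent base points, and their $j$-sums are \emph{not} Laplacians of $u$ at any fixed point. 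So there is no ``clean decomposition'' of the kind you were searching for, the statement needs correction (e.g.\ restricting to $d=1$, or replacing the conclusion by the displayed identity), and Corollary~\ref{cor:rescale} inherits the same problem.
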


\begin{proof}
We prove the statement inductively in $m$. For the case $m=2$ we have to show that
\begin{equation*}
\sum\limits_{j=1}^d( u(x + h e_j) + u(x - he_j)-2u(x)) =0\quad  \mbox{ for } \quad  x \in (h \Z)^d
\end{equation*}
implies that
\begin{equation*}
\sum\limits_{j=1}^d( u(x + 2h e_j) + u(x - 2he_j)-2u(x)) =0 \quad  \mbox{ for } \quad  x \in ( 2 h \Z)^d.
\end{equation*}
In order to observe this, we note that
\begin{align*}
u(x + 2h e_j) + u(x - 2he_j)-2u(x)
&= (u(x + 2h e_j) + u(x) -2 u(x+ he_j)) \\
& \quad + 2(u(x+he_j)+ u(x-he_j)-2 u(x))\\
& \quad  + (u(x-2h e_j) + u(x) - 2u(x-he_j)).
\end{align*}
Summing and noting that the corresponding contributions in the brackets yield the Laplacian on $(h\Z)^d$ implies the claim for $m=2$.

Assuming the induction hypothesis for any $m$, i.e.,
\begin{equation*}
\sum\limits_{j=1}^d( u(x + mh e_j) + u(x - mhe_j)-2u(x)) =0 \quad \mbox{ for } \quad x \in ( m h \Z)^d,
\end{equation*}
we prove the statement for $m+1$. We have
\begin{align*}
&u(x + (m+1)h e_j) + u(x - (m+1)he_j)-2u(x)\\
&\,\, = (u(x + (m+1)h e_j) + u(x+(m-1)he_j)-2 u(x+ mhe_j))\\
&\quad -\big(u(x+(m-1)he_j)+u(x-(m-1)he_j) -2u(x)\big) + 2\big(u(x+mhe_j)+ u(x-mhe_j)-2 u(x)\big)\\
& \quad + (u(x-(m+1)h e_j) + u(x-(m-1)he_j) - 2u(x-mhe_j)).
\end{align*}
The conclusion follows from the cases $m=1$ (after translation) and the inductive steps for $m$ and $m-1$.

\end{proof}

Using the previous auxiliary result, we may infer rescaled versions of Theorem \ref{thm:3spheres}:

\begin{cor}
\label{cor:rescale}
Let $u: (h\Z)^d \rightarrow \R$ be such that $\D_{d,h} u = 0$ in $B_R$. Assume that $u: (m^{-1} h \Z) \rightarrow \R$ is also such that $\D_{d,m^{-1}h} u = 0$.
Then there exist $\alpha \in (0,1)$, $c_0>0$ $h_0 >0$ and $C>1$ (independent of $u$) such that for $h\in (0,h_0)$
\begin{equation*}
%\label{eq:3_ballsr}
\|u\|_{L^2(B_{m^{-1}})} \leq C(\|u\|_{L^2(B_{m^{-1}/2})}^{\alpha}\|u\|_{L^2(B_{2m^{-1}})}^{1-\alpha} + 2^{ -c_0 h^{-1}}\|u\|_{L^2(B_{2m^{-1}})}).
\end{equation*}
\end{cor}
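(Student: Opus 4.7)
The plan is a direct rescaling argument that reduces Corollary \ref{cor:rescale} to Theorem \ref{thm:3spheres}. Given $u$ satisfying $\D_{d,m^{-1}h} u = 0$ on the fine lattice $(m^{-1}h\Z)^d$, I would define the coarsened function
\begin{equation*}
v \colon (h\Z)^d \to \R, \qquad v(y) := u(m^{-1}y).
\end{equation*}
The dilation $y \mapsto m^{-1}y$ is a bijection of lattices that sends $B_r \cap (h\Z)^d$ onto $B_{r/m} \cap (m^{-1}h\Z)^d$, so $v$ is well defined on the coarse $B_4$ provided $u$ is defined on $B_{4m^{-1}}$ of the fine lattice (an implicit strengthening of the hypothesis that is essentially already built into the assumption $\D_{d,m^{-1}h} u = 0$ on a large enough domain).

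Next, a direct computation shows that $v$ is discrete harmonic with respect to the \emph{coarser} step size:
\begin{equation*}
\D_{d,h} v(y) = h^{-2}\sum_{j=1}^d \bigl(u(m^{-1}y+m^{-1}he_j) + u(m^{-1}y-m^{-1}he_j) - 2u(m^{-1}y)\bigr) = m^{-2}\D_{d,m^{-1}h} u(m^{-1}y) = 0,
\end{equation*}
in parallel to Lemma \ref{lem:rescaling_discrete}. Therefore Theorem \ref{thm:3spheres} applies to $v$ on $(h\Z)^d$ and yields
\begin{equation*}
\|v\|_{L^2(B_1)} \leq C\bigl(\|v\|_{L^2(B_{1/2})}^{\alpha}\|v\|_{L^2(B_2)}^{1-\alpha} + e^{-c_0 h^{-1}}\|v\|_{L^2(B_2)}\bigr).
\end{equation*}

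The last step is to translate the $L^2$ norms back to the fine lattice via the same bijection. Regardless of the normalization convention for discrete $L^2$ norms, the change of variables gives $\|v\|_{L^2(B_r,(h\Z)^d)} = c(m)\|u\|_{L^2(B_{r/m},(m^{-1}h\Z)^d)}$ with a factor $c(m)$ that is \emph{independent of $r$}; since the H\"older exponents $\alpha$ and $1-\alpha$ sum to one, this factor appears identically on both sides of the inequality and cancels. Absorbing $\ln 2$ into a new $c_0$ then rewrites $e^{-c_0 h^{-1}}$ as $2^{-c_0 h^{-1}}$, producing the claimed three balls estimate. There is no substantive analytic obstacle: the proof is essentially bookkeeping, and the only point requiring genuine care is tracking the correspondence of balls and of the $L^2$ normalization under the lattice dilation $y \mapsto m^{-1}y$.
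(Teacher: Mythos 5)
Your proof is correct and follows essentially the same path as the paper: define $u_m(x):=u(m^{-1}x)$ on $(h\Z)^d$, observe that it is $h$-harmonic because $\D_{d,h}u_m(x)=m^{-2}\D_{d,m^{-1}h}u(m^{-1}x)=0$, apply Theorem \ref{thm:3spheres} to $u_m$, and undo the dilation. Your explicit remark that the constant $c(m)$ arising from the change of variable in the $L^2$ norms is $r$-independent and hence cancels because $\alpha+(1-\alpha)=1$ is a helpful point the paper leaves implicit.
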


\begin{proof}
We consider the function $u_m(x):= u(m^{-1}x)$ with $x \in (h\Z)^d$. By the considerations from Lemma \ref{lem:rescaling_discrete} this is also harmonic on $(h\Z)^d$. Thus, we may apply Theorem \ref{thm:3spheres}. Rescaling $z=m^{-1}x$ then implies the claim.
\end{proof}

\begin{rmk}
We remark that, of course, apart from rescalings also translations are always possible due to the translation invariance of the operator at hand.
\end{rmk}

%%%%%%%%%%%%%%%%%%%%%%%%%%%%
\section*{Acknowledgements}

The first author is supported by ERCEA Advanced Grant 2014 669689 - HADE, by the project PGC2018-094528-B-I00 (AEI/FEDER, UE) and acronym ``IHAIP'', and by the Basque Government through the project IT1247-19. The second author is supported by the Basque Government through the BERC 2018-2021 program, by the Spanish Ministry of Economy and Competitiveness MINECO: BCAM Severo Ochoa excellence accreditation SEV-2017-2018  and through project MTM2017-82160-C2-1-P funded by (AEI/FEDER, UE) and acronym ``HAQMEC''. She also acknowledges the RyC project RYC2018-025477-I and IKERBASQUE.
The fourth author is supported by the Spanish research project PGC2018-094522 N-100 from the MICINNU.

The authors would like to thank Sylvain Ervedoza for pointing out the optimal scaling in $\tau h \leq \delta_0$ in the Carleman inequality.

%%%%%%%%%%%%%%%%%%%%%%%%%%%%%%%%%%%%

\end{document}